\documentclass[12pt]{amsart}

\usepackage{amsmath,amssymb,amsfonts}

\newtheorem{theorem}{Theorem}[section]
\newtheorem{lemma}[theorem]{Lemma}
\newtheorem{proposition}[theorem]{Proposition}

\newtheorem{thm}{Theorem}
\usepackage{xcolor}

\begin{document}

\title[One-generator nilpotent braces]{On the structure of some one-generator nilpotent braces}

\author{Martyn R. Dixon}
\address[Martyn Dixon]
{Department of Mathematics\\
University of Alabama\\
Tuscaloosa, AL 35487-0350, U.S.A.}
\email{mdixon@ua.edu}
\author{Leonid A. Kurdachenko}
\address[Leonid Kurdachenko]
{Department of Algebra, Facultet of mathematic and mechanik\\
National University of Dnepropetrovsk\\
Gagarin prospect 72\\
Dnipro 10, 49010, Ukraine.}
\email{lkurdachenko@i.ua, lkurdachenko@gmail.com}
\author[I. Subbotin]{Igor Ya. Subbotin}
\address[Igor Ya. Subbotin]{Department of Mathematics and Natural Sciences, National University\\
5245 Pacific Concourse Drive,  \\
Los Angeles, CA 90045-6904, USA.}
\email{isubboti@nu.edu}

\begin{abstract} 
This article provides a detailed description of some nilpotent left braces generated by one element.
\end{abstract} 
\thanks{The second author would like to thank the Isaac Newton Institute for Mathematical Sciences, 
Cambridge, for support and hospitality during the Solidarity Supplementary Grant Program 
when work on this paper was undertaken. This work was supported by 
EPSRC grant no EP/R014604/1.34. The second author is grateful to the Head of the School of Mathematic at the University of Edinburgh,  Bernd Schoers,  and the personal Chair in Algebra, Agata Smoktunowicz.
}
\dedicatory{To the memory of our friend, Francesco de Giovanni}

\keywords{Brace, nilpotent brace, $\star$-hypercentral brace, locally $\star$-nilpotent brace}

\subjclass[2010]{Primary: 16N80; Secondary 16T25, 16N40, 20F19}

\maketitle

\newcommand{\sub}[2]{\langle#1,#2\rangle}%subgp gen by two elements
\newcommand{\cyclic}[1]{\langle #1\rangle}%cyclic group
\newcommand{\norm}{\triangleleft\,}%normal subgroup
\newcommand{\core}[2]{\text{core}\,_{#1}\,#2}%the core of a subgroup
\newcommand{\cpin}{C_{p^{\infty}}}%Cpinfinity
\newcommand{\co}[1]{\text{$\mathbf{#1}$}}% closure op
\newcommand{\cogo}[2]{\co{#1}\mathfrak{#2}}%closure op and class
\newcommand{\ann}{\text{Ann\,}}%annihilator
\newcommand{\spec}{\text{Spec\,}}%spectrum
\newcommand{\listel}[2]{#1_1,\dots, #1_{#2}}%list of elements
\newcommand{\aut}{\text{Aut}\,}%aut G.
\newcommand{\cent}[2]{C_{#1}(#2)}%centralizer
\newcommand{\Dr}{\text{Dr}\,}%direct product
\newcommand{\dir}[3]{\underset{#1\in #2}{\Dr}#3_{#1}}%direct product
\newcommand{\nocl}[2]{\langle #1\rangle^{#2}}%normal closure of element
\newcommand{\qn}{\,\text{qn}\,}%quasinormal
\newcommand{\comm}[2]{#1^{-1}#2^{-1}#1#2}%commutator
\newcommand{\conj}[2]{#1^{-1}#2#1}%conjugate
\newcommand{\izer}[2]{N_{#1}(#2)}%normalizer
\newcommand{\gl}{\text{GL\,}}%general linear
\newcommand{\subgp}[3]{\langle#1_{#2}\mid #2\in#3\rangle}%subgp
\newcommand{\direct}[3]{\underset{#1\in #2}{\Dr}\langle#3_{#1}\rangle}
\newcommand{\omn}[1]{\Omega_n{(#1)}}
\newcommand{\Hom}{\text{Hom}\,}
\newcommand{\Cr}[3]{\underset{#1\in #2}{\text{Cr}}\,#3_{#1}}%Cart prod
\newcommand{\inflist}[1]{\{#1_1,#1_2,\dots\}} %infinite countable list
\newcommand{\cart}{\text{Cr}\,} %Cartesian product
\newcommand{\carti}[3]{\underset{#1\geq #2}\cart #3_{#1}} %cart product
\newcommand{\fgsub}[2]{\langle #1_1,\dots,#1_{#2}\rangle}%finitely gen subgp
\newcommand{\cwr}{\,\bar{\wr}\,}%complete wreath product
\newcommand{\minn}{\infty$-$\overline{\mathfrak{N}}}
\newcommand{\mnn}{\overline{\mathfrak{N}}}
\newcommand{\mnp}{\overline{\mathcal{P}}}
\newcommand{\mns}{\overline{\mathfrak{S}}}
\newcommand{\minsd}{\infty$-$\overline{\mathfrak{S}_d}}
\newcommand{\minnc}{\infty$-$\overline{\mathfrak{N}_c}}
\newcommand{\ms}{\mathfrak{S}}
\newcommand{\mn}{\mathfrak{N}}
\newcommand{\mf}[1]{\mathfrak{#1}}
\newcommand{\sdr}{\mathfrak{S}_d(r)}
\newcommand{\sd}{\mathfrak{S}_d}
\newcommand{\mfr}{\mathfrak{R}}
\newcommand{\ncr}{\mathfrak{N}_c(r)}
\newcommand{\mfsr}{\overline{\ms\mfr}}
\newcommand{\lslf}{\overline{(\cogo{L}{S})(\cogo{L}{F})}}
\newcommand{\mcp}{\mathcal{P}}
\newcommand{\mcps}{\mathcal{P}^*}
\newcommand{\mfas}{\mathfrak{A}^*}
\newcommand{\mfa}{\mathfrak{A}}
\newcommand{\mfss}{\mathfrak{S}^*}
\newcommand{\mfns}{\mathfrak{N}^*}
\newcommand{\wmcnq}{max-$\infty$-$\overline{\text{qn}}$ }
\newcommand{\om}{\omega}
\newcommand{\bog}[1]{\bar{\omega}(#1)}
\newcommand{\og}[1]{\omega(#1)}
\newcommand{\wi}{\overline{\om}_i}
\newcommand{\w}{\overline{\om}}
\newcommand{\fsn}{\emph{f}-subnormal }
\newcommand{\wiw}{\om_i}
\newcommand{\ww}{\om}

\newcommand{\zefin}[1]{\zeta_{\infty}(\star,#1)}
\newcommand{\zet}[1]{\zeta(\star,#1)}
\newcommand{\zen}[2]{\zeta_{#1}(\star,#2)}
\newcommand{\al}{\alpha}
\newcommand{\ga}{\gamma}
\newcommand{\be}{\beta}
\newcommand{\la}{\lambda}
\newcommand{\La}{\Lambda}
\newcommand{\ze}{\zeta}
\newcommand{\soc}[1]{\text{Soc}(#1)}
\newcommand{\zl}[1]{\text{zl}(#1)}
\newcommand{\ldl}{\leq\dots\leq}
\newcommand{\br}[1]{\textbf{br}(#1)}
\newcommand{\oq}[1]{(#1_1,#1_2,#1_3,#1_4)}

\section{Introduction} \label{s:intro}

A \emph{left brace  (or skew left brace of abelian type)}  is a set $A$ together with two binary operations, addition denoted by $+$ and multiplication denoted by $\cdot$ (which is often omitted), satisfying the following conditions:

\begin{enumerate}
\item[LB1] $A$ is an abelian group under addition;
\item[LB2] $A$ is a group under multiplication;
\item[LB3] $a(b+c)= ab+ac -a$ for all $a, b, c\in  A$.
\end{enumerate}

In this paper, all braces will be left braces in this sense.
Braces were first introduced by W. Rump in \cite{wR05} as a generalization of Jacobson radical rings in order to help study involutive set-theoretic solutions of the Yang-Baxter equation.  %The theory of left braces has its roots in studying the Yang-Baxter equation, a fundamental concept 
This fundamental equation has profound implications in both pure mathematics and physics.  It originated from the groundbreaking work of the Nobel prize-winning physicist Yang in the realm of statistical mechanics and independently in the contributions of Baxter to the $8$-vertex model. This theory holds substantial significance across diverse domains such as knot theory, braid theory, operator theory, Hopf algebras, quantum groups, $3$-manifolds and the monodromy of differential equations.

In some sense braces represent a type of fusion of two groups  such that the zero element of the additive group is the identity element of the multiplicative group and we denote this common neutral element by $0$ or $1$, as appropriate.  There is therefore a fundamental difference between left braces and associative rings.  We shall sometimes denote the additive group of $A$ by $(A,+)$ and the multiplicative group of $A$ by $(A,\cdot)$.  

The theory of left braces is currently developing rapidly and as with other branches of algebra the examination of the internal structure of braces plays a crucial role in their study. 

%It is easy to see from this definition that the additive and multiplicative identities of $A$ coincide %and we denote this common identity by $0$ or $1$, as appropriate. We shall denote the additive %group of $A$ by $(A,+)$ and the multiplicative group of $A$ by $(A,\cdot)$.  We note that many %authors prefer to use the notation $\circ$ instead of $\cdot$ for the multiplicative operation
%(because if $R$ is a Jacobson radical ring, then $R$ becomes a brace under the circle operation %that can be defined in such a ring), but have here chosen the simpler notation as a matter of %personal preference. 
One of the first natural steps in the study of an algebraic structure is to clarify the structure of its substructures generated by one element.  For some structures (for example,  groups, Lie algebras, commutative rings) this task is simple. However, for left braces this is generally a highly complex problem. Therefore, it is natural to begin studying the structure of such left braces under some additional conditions. This work is devoted to the study of one-generator left braces. 

We first introduce some necessary concepts.
 The reader is referred to \cite{fC18} for some elementary properties and the connections with Yang-Baxter equations, which we do not address here.  
 %Skew braces have been studied in detail in a number of recent papers (see in particular
 %\cite{BJ21,JVV22, mT23}) which are important and which contain many of the results of  this %survey in more generality. 
 
 %As with other algebraic theories,  an interesting question focuses on  the study of the internal %structure of left braces. In the case of braces we have two groups that are fairly tightly coupled. The %structure of both the additive group and the multiplicative group have a significant impact on the %general structure of the whole left brace. Therefore, in the study of left braces, it is natural to use the %methodologies and approaches that have proved to be effective in group theory, suitably modified %for left braces. In this brief survey we detail some interesting recent work of various authors %concerning nilpotent braces. All braces considered here will be skew braces of abelian type, the %emphasis being on the similarity with nilpotent and hypercentral groups.
 
 If $A$ is a left brace, then a subset $S$ of $A$ is called a \emph{subbrace} (more precisely a \emph{left subbrace}) %if $S$ is closed %under addition and multiplication and is itself a left brace by restriction of these operations to $S$. %Thus $S$ is a subbrace of $A$ 
 if $S$ is closed under  addition and multiplication and is a left brace by 
restriction of these operations to  $S$.  Thus a subset $S$ of $A$ is a subbrace of $A$ if and only if $S$ is closed under addition and multiplication  and under addition $S$ is an additive subgroup of $A$ and under multiplication $S$ is a multiplicative subgroup of $A$.
 %$(S,+)$ is a subgroup of $(A,+)$ and $(S,\cdot)$ is a subgroup of $(A,\cdot)$.  Thus $S$ is a left %brace in its own right under the same operations  as used to define $A$.
 
 For the left brace $A$ and for each element $a\in A$ we define a map $\lambda_a:A\longrightarrow A$ by $\lambda_a(x)=ax-a$ for all $x\in A$. Furthermore we define $a\star b =ab-a-b$ for all $a,b\in A$. It is easy to see that $a\star b=\lambda_a(b)-b$. The operation $\star$  plays a very important role in the study of left braces.
 
 A left brace $A$ is called \emph{trivial} or \emph{abelian} if $a\star b=0$ for all $a,b\in A$, or equivalently, $a+b=ab$.  In the case of abelian braces the addition and multiplication coincide.
 %the groups $(A,+)$ and $(A,\cdot)$ coincide. %Furthermore, if %$K,L$ are subbraces of $A$, then the additive subgroup of $A$ generated by the elements %$x\star y$, where $x\in K, y\in  L$ is denoted by $K\star L$.
 
A subbrace $L$ of the left brace $A$ is called a \emph{left ideal} of $A$ if $a\star b\in L$ for all elements $a\in A$ and $b\in L$.  
This means that if $x, y\in L$, then $x-y\in L$ and if $a\in A$ and $z\in L$, then $\la_a(z)\in L$,
as  in \cite{fC18}.
 We recall that a subbrace $L$ of a left brace $A$ is an \emph{ideal} if $a\star z, z\star a\in L$ for all $a\in A, z\in L$.  
 
 If $A$ is a left brace and $K,L$ are subbraces, then we let $K\star L$ denote the subgroup of $(A,+)$ generated by the elements $x\star y$, where $x\in K, y\in L$.
 
 We define two  canonical series of the left brace $A$.  First, let $A^{(1)}=A$ and recursively define $A^{(\alpha +1)}=A^{(\alpha)}\star A$ for all ordinals $\alpha$ and as usual set $A^{(\lambda)}=\cap_{\mu<\lambda}A^{(\mu)}$ for all limit ordinals $\lambda$.  As in \cite{CGS17} $A$ is called \emph{right nilpotent} if $A^{(n)}=0$ for some natural number $n$.
 
 Similarly, let $A^{1}=A$, $A^{\alpha+1}=A\star A^{\alpha}$, for all ordinals $\alpha$ and $A^{\lambda}=\cap_{\mu<\lambda}A^{\mu}$ for all limit ordinals $\lambda$. As in \cite{CGS17} $A$ is \emph{left nilpotent} if $A^{m}=0$ for some natural number $m$. We note that a left nilpotent brace needs not be right nilpotent and vice-versa (see \cite{dB15} and \cite{fC18}, for example). Indeed Rump~\cite{wR07b} gave an example of a left brace of cardinality $6$ such that $A^{(3)}=0$, but $A^m\neq 0$ for all $m$.
 
 It is shown in \cite{fC18} that $A^n$ is always a left ideal of $A$ and $A^{(n)}$ is an ideal of $A$. This has been extended in \cite{DKS24} to include all ordinals $\alpha$.

 %As with other algebraic structures it is possible to study  the concept of nilpotency.  
 In the theory of braces there are several different approaches to the concept of nilpotency and we refer the reader to the papers \cite{fC18, CGS17, CSV19, JVV22,aS18,aS22}.  % In group theory many generalizations of nilpotency %arise and so it is with braces also. 
 Our approach to nilpotency is based on the following concept, first introduced by Bonatto and Jedli\v{c}ka~\cite{BJ21} %(who introduced the term ``centrally nilpotent'') 
 and Jespers, Van Antwerpen and Vendramin~\cite{JVV22}. %(who introduced the term ``annihilator %nilpotent'').
 
 Let $A$ be a left brace. The set 
 \begin{align*}
 \zeta(\star, A)&=\{a\in A| a\star x=x\star a= 0 \text{ for all }x\in A\}\\
 &=\{a\in A| ax=a+x=xa \text{ for all } x\in A\}\\
 &=\{a\in A| \la_a(x)=x \text{ and } \la_x(a)=a \text{ for all }x\in A\}
 \end{align*}
 is here called the \emph{$\star$-center} of $A$. It is well known  that $\zet{A}$ is an ideal of $A$ which is contained in the center, $\zeta(A)$, of the multiplicative group of $A$, a  result which appears in \cite{BJ21} and \cite{JVV22}.

 By analogy with groups we construct the \emph{upper $\star$-central series}
 \[
 0=\zeta_0(\star, A)\leq \zeta_1(\star, A)\leq \dots \zeta_{\alpha}(\star, A)\leq \zeta_{\alpha+1}(\star, A) \leq 
\dots \zeta_{\gamma}(\star, A)
\]
 as follows. We let 
 \begin{align*}
 \zeta_{1}(\star, A)&=\zeta(\star, A) \text{ and }\\
 \zeta_{\alpha+1}(\star, A)/\zeta_{\alpha}(\star, A)&=\zeta(\star, A/\zeta_{\alpha}(\star, A))
 \end{align*}
  for all ordinals $\alpha$; as usual for limit ordinals $\lambda$ we set $\zeta_{\lambda}(\star, A)=\cup_{\mu<\lambda}\zeta_{\mu}(\star, A)$.

 This definition and our remark above imply that each term of this series is an ideal of $A$.  The last term $\zeta_{\infty}(\star, A)=\zeta_{\gamma}(\star, A)$ of this series is called the  \emph{upper $\star$-hypercenter} of $A$ and we denote the length of the upper $\star$-central series of $A$ by $\text{zl}(A)$. Furthermore, if $A=\zeta_{\infty}(\star, A)$, then $A$ is called a \emph{$\star$-hypercentral} brace; in this case, if $\text{zl}(A)$ is finite, then we say that $A$ is \emph{$\star$-nilpotent}.  This series has also been introduced in \cite{BJ21, CFT23, mT23}. % but there %this series is called the \emph{upper annihilator series} and the last term is then called the 
 %\emph{hyper-annihilator}; a $\star$-hypercentral brace is called in \cite{mT23} \emph{annihilator %hypercentral} and in \cite{BJ21, JVV22} $\star$-nilpotency is called \emph{central nilpotency (and %respecitively) annihilator nilpotency}.
 
 %If $A$ is a left brace and $K,L$ are subbraces, then we let $K\star L$ denote the subgroup of $(A,%+)$ generated by the elements $x\star y$, where $x\in K, y\in L$.
 
 When $A$ is a left brace and $B,C$ are ideals of $A$ with $B\leq C$,  then the factor $C/B$ is called \emph{$\star$-central} if $A\star C, C\star A\leq B$. Thus for all $a\in A, c\in C$ we have %$ac-a-c, ca-%a-c\in B$ in this case.
 $a\star c, c\star a \in B$.
 
% Let 
 %\[
% 0=C_0\leq C_1\leq \dots C_{\alpha}\leq C_{\alpha+1}\leq \dots C_{\gamma}
% \]
% be an ascending series of ideals of the the brace $A$. This series is called \emph{$\star$-central} if %every factor of this series is $\star$-central; in other words $A\star C_{\alpha+1}, C_{\alpha+1}\star %A\leq C_{\alpha}$ for all $\alpha <\gamma$. It is clear that the upper $\star$-central series is $
%\star$-central.

 We shall say that $A$ is \emph{Smoktunowicz-nilpotent} if there are natural numbers $n,k$ such that $A^{(n)}=A^{k}$. Such braces were introduced by A. Smoktunowicz in the paper \cite{aS18}. In \cite[Theorem 1.3]{aS18} a single criterion was given for these conditions and left braces satisfying this single criterion were in \cite{aS22} called \emph{strongly nilpotent}.

A  left brace is $\star$-nilpotent if and only if it is Smoktunowicz-nilpotent (see \cite{DKS24} for example). 
 In this paper we study the structure of certain one-generator Smoktunowicz-nilpotent braces.
 Here,  if   $A$ is a left brace and $M$ is a subset of $A$, then the \emph{subbrace $B$ of $A$ generated by $M$} is the intersection of all subbraces $C$ of $A$ that contain $M$.  %Since $(B,+)$ is a %subgroup of $(A,+)$ and since $(B,\cdot)$ is a subgroup of $(A,\cdot)$ it is clear that $B$ is %precisely the set of all  finite sums of products of elements of $M$ together with their inverses. 
 If $M$ is a finite set, then $B$ will be called \emph{finitely generated}. We let $\textbf{br}(M)$ denote the subbrace generated by $M$. We refer the reader to \cite {CFT23,JVV22} for further details and properties.
 
 We shall let $\mathfrak{N}_S(n,k)$ denote the class of left braces $A$ satisfying $A^{(n)}=0=A^k$ where $n,k$ are the least integers with this property.
 
 One-generator braces $A$ satisfying $A^3=0$ were studied in the paper \cite{KS24} and one-generator braces satistfying $A^{(3)}=0$ were studied in the paper \cite{BEKP24}. The next natural step is the study of left braces in the class $\mathfrak{N}_S(4,4)$. The computations here become very cumbersome. We note that left braces in this class are $\star$-nilpotent of $\star$-nilpotency class at most $16$.  Therefore,  we have  changed the approach a little and moved on to a consistent study of one-generator $\star$-nilpotent braces and such a study begins in this article. 
 
 The first natural step here is the study of a one-generator left brace $A$ such that $\zl{A}=2$. The description of such a brace is known, but we have not found a source where this is given so for the sake of completeness we present this here. 
 
 Let $D(1,2)= \mathbb{Z}\times \mathbb{Z}$ and define an addition and a multiplication on $D(1,2)$ by 
\begin{align}\label{eq:defD12}
(m_1,m_2)+(n_1,n_2)&=(m_1+n_1,m_2+n_2)\\
(m_1,m_2)\cdot (n_1,n_2) &=(m_1+n_1,m_2+n_2+m_1n_1)
\end{align}
for $m_i,n_i\in \mathbb{Z}$.
Our first main result is as follows.

\begin{thm}\label{t:maina}
\begin{enumerate}
\item[ (i)] The set $D(1,2)$ with the addition and multiplication as defined is a one-generator, Smoktunowicz-nilpotent left brace and $\zl{D(1,2)}=2$.
\item[(ii)] Let $A$ be a non-abelian one generator left brace. If $A=\zen{2}{A}$, then $A$ is an epimorphic image of $D(1,2)$.
\end{enumerate}
\end{thm}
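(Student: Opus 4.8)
For part (i), everything is a direct verification. Axioms LB1 and LB3 are immediate, while for LB2 one checks that $(0,0)$ is the multiplicative identity, that $(m_1,m_2)$ has inverse $(-m_1,\,m_1^2-m_2)$, and that multiplication is associative (both triple products of $(m_1,m_2),(n_1,n_2),(p_1,p_2)$ equal $(m_1+n_1+p_1,\,m_2+n_2+p_2+m_1n_1+m_1p_1+n_1p_1)$). A one-line computation gives $(m_1,m_2)\star(n_1,n_2)=(0,m_1n_1)$, whence $D(1,2)^{(2)}=D(1,2)^2=\{0\}\times\mathbb{Z}$ and then $D(1,2)^{(3)}=D(1,2)^3=0$, so $D(1,2)$ is Smoktunowicz-nilpotent (indeed $D(1,2)\in\mathfrak{N}_S(3,3)$). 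Since $(1,0)\star(1,0)=(0,1)$, the subbrace generated by $(1,0)$ contains both $(1,0)$ and $(0,1)$ and, being an additive subgroup of $\mathbb{Z}\times\mathbb{Z}$, must be all of $D(1,2)$; hence $D(1,2)=\br{(1,0)}$ is one-generated. Finally, the formula for $\star$ shows $\zet{D(1,2)}=\{0\}\times\mathbb{Z}$, and the quotient $D(1,2)/(\{0\}\times\mathbb{Z})$ is a trivial brace (both of its operations reduce to $(m_1,n_1)\mapsto m_1+n_1$), so $\zen{2}{D(1,2)}=D(1,2)$ while $\zen{1}{D(1,2)}\neq D(1,2)$; thus $\zl{D(1,2)}=2$.

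For part (ii), write $A=\br{g}$. The hypothesis $A=\zen{2}{A}$ says exactly that $A/\zet{A}$ is a trivial brace, i.e.\ $a\star b\in\zet{A}$ for all $a,b\in A$; since $\zet{A}$ is an additive subgroup this gives $A^2=A^{(2)}=A\star A\le\zet{A}$. Next I would record, under this hypothesis, that $\star$ is additive in each of its two variables. Additivity in the second variable holds in every left brace because each $\lambda_a$ is an additive map. For the first variable one uses the standard identities $\lambda_{xy}=\lambda_x\lambda_y$ and the additivity of $\lambda_x$, which together give $(xy)\star c=x\star(y\star c)+(y\star c)+(x\star c)$, combined with $x\star(y\star c)=0$ (as $y\star c\in\zet{A}$) and the identity $a+b=a\,\lambda_{a^{-1}}(b)$; this also yields $(b+z)\star c=b\star c$ and $c\star(b+z)=c\star b$ whenever $z\in\zet{A}$. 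One then defines $\phi\colon D(1,2)\to A$ by $\phi(m_1,m_2)=m_1g+m_2(g\star g)$, the integer multiples being taken in $(A,+)$; since $(D(1,2),+)$ is free abelian on $(1,0)$ and $(0,1)$, this is a well-defined homomorphism of additive groups.

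The decisive step is to check that $\phi$ is also multiplicative. Using $u\cdot v=u+v+u\star v$ together with the biadditivity of $\star$ and the fact that $g\star g\in\zet{A}$ (so that $g\star(g\star g)=(g\star g)\star g=(g\star g)\star(g\star g)=0$), every cross term vanishes and one is left with, for $u=\phi(m_1,m_2)$ and $v=\phi(n_1,n_2)$, the identity $u\star v=(m_1n_1)(g\star g)$, so that $\phi(m_1,m_2)\phi(n_1,n_2)=(m_1+n_1)g+(m_2+n_2+m_1n_1)(g\star g)=\phi\bigl((m_1,m_2)(n_1,n_2)\bigr)$. Since $\phi$ is now both additive and multiplicative, $\operatorname{Im}\phi$ is simultaneously an additive subgroup and a multiplicative subgroup of $A$, hence a subbrace; as it contains $g=\phi(1,0)$ it must equal $\br{g}=A$, so $\phi$ is the desired epimorphism.

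I expect the main obstacle to be the left-additivity of $\star$ in part (ii): additivity in the second variable is free, but additivity in the first genuinely uses $A^2\le\zet{A}$, via the rewriting $a+b=a\,\lambda_{a^{-1}}(b)$ and the vanishing of terms $x\star(y\star c)$. A second, smaller point to watch is the logical order: the surjectivity of $\phi$ is available only after multiplicativity is established, since it is precisely the \emph{subbrace} generated by $g$ that is being recovered as $\operatorname{Im}\phi$. I would also note that the non-abelian hypothesis on $A$ is not essential for the stated conclusion — if $A$ is abelian then $g\star g=0$, $\phi(m_1,m_2)=m_1g$, and $\phi$ is still an epimorphism — its role is only to guarantee $\zl{A}=2$ exactly, matching part (i).
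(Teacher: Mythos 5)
Your proposal is correct. Part (i) is essentially the paper's own verification (brace axioms, the formula $(m_1,m_2)\star(n_1,n_2)=(0,m_1n_1)$, generation by $(1,0)$, and $\zl{D(1,2)}=2$), so there is nothing to add there.

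For part (ii) you take a genuinely different, and in fact cleaner, route. The paper never isolates bilinearity of $\star$: it first uses the binomial expansions of Propositions~\ref{p:prop1} and \ref{p:prop2} to get $a^k\star a=k(a\star a)$ for positive $k$, then a separate argument with $a^{-k}a^k=1$ and the coset identity $a^{-k}\zet{A}=(-ka)\zet{A}$ to extend this to all integers, then verifies by hand that the additive set $B=\{\al a+\ga c\}$ is closed under multiplication and inversion, so that $B=\br{a}=A$, and only then defines the epimorphism $f(\al,\ga)=\al a+\ga c$. You instead observe that the hypothesis $A=\zen{2}{A}$, i.e.\ $A\star A\le\zet{A}$, forces $\star$ to be additive in the first variable as well (via Lemma~\ref{l:lem2}(ii)--(iii), $a+b=a\lambda_{a^{-1}}(b)$, and the vanishing of $x\star(y\star c)$), so that $u\star v=m_1n_1(g\star g)$ drops out at once, negative coefficients come for free, and surjectivity follows because the image of your homomorphism is a subbrace containing the generator. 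Your ordering of the logic (multiplicativity before surjectivity) is also handled correctly. What the paper's approach buys is reusable infrastructure: the power formulas of Propositions~\ref{p:prop1}--\ref{p:prop2} and the explicit description of $A$ as $B$ are exactly what get recycled in Section~\ref{s:thmb} for the $\zl{A}=3$ case, where $\star$ is no longer bilinear; what your approach buys is a shorter, more conceptual proof of Theorem~\ref{t:maina}(ii) itself, together with the correct side remark that the non-abelian hypothesis is not needed for the epimorphism statement.
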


The main part of this work is devoted to the study of periodic one-generator $\star$-nilpotent left braces $A$ such that $\zl{A}=3$. Our second main result is:

\begin{thm}\label{t:mainb}
Let $A$ be a one-generator %non-abelian 
left brace, generated by an element $a$ which satisfies $na=0$ for some natural number $n$. % satisfying $na=0$ %for some positive integer $n$. 
If $A=\zen{3}{A}$, then $A^2$ is abelian.
\end{thm}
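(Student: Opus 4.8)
The plan is to exploit the fact that, since $A$ is one-generator and $A = \zeta_3(\star,A)$, the brace $A^2$ is ``small'' in a controlled way: it is generated (as an ideal, or additively together with $\star$-products) by a bounded number of elements built from $a$. Concretely, write $t = a\star a$; then $A^2$ is the left ideal generated by $t$, and $A^3 = A\star A^2$ lies in $\zeta_1(\star,A)$, so $A^3$ is central (in the $\star$-sense) and in particular $A^3\star A^3 = 0$ and $A^2\star A^3 = A^3\star A^2 = 0$. The goal is to show $A^2\star A^2 = 0$. First I would assemble the basic commutator-style identities available in any left brace — the relation $a\star(b+c) = a\star b + a\star c + (a\star b)\star c$ (equivalently $\lambda_a$ is additive up to a correction), the left-distributivity of $\star$ over $+$ in the first variable modulo higher terms, and the formula $\lambda_{ab} = \lambda_a\lambda_b$ — and record how they simplify once we know $A^3\leq\zeta_1(\star,A)$ and $A^4 = 0$ (note $A\in\mathfrak{N}_S$-type hypotheses give left nilpotency, and $\zl{A}=3$ forces $A^4=0$). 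These simplifications should let me treat $A^2/A^3$ as a module over $A/A^2$ on which the action is by ``affine'' maps whose linear parts are unipotent of small degree.

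Next I would pin down the additive structure. Since $A$ is generated by $a$ and $na = 0$, the additive group $(A^2,+)$ is a finitely generated abelian group of bounded exponent — in fact generated additively by the finitely many iterated $\star$-products of $a$ of weight $2$ and $3$ — so it is a finite abelian $p$-group (after reducing to the $p$-primary components, which is harmless since everything in sight is an ideal and the primary decomposition is respected by the operations). The key structural claim I would isolate is: $A^2$ is generated as a left ideal by the single element $t = a\star a$, and modulo $A^3$ it is generated additively by $t$ alone, because every element of $A$ is, modulo $A^2$, a multiple of $a$, and $\lambda_a(t) - t = a\star t \in A^3$. So $A^2 = \langle t\rangle_+ + A^3$ as additive groups, where $\langle t\rangle_+$ is the cyclic additive subgroup generated by $t$.

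Now the heart of the argument: to show $A^2\star A^2 = 0$ it suffices, by biadditivity of $\star$ modulo higher terms together with $A^2\star A^3 = A^3\star A^2 = 0$, to show $t\star t = 0$. Here I would use the hypothesis $A = \zeta_3(\star,A)$ more forcefully: $t = a\star a \in A^2$, and $A^2\star A \leq A^3 \leq \zeta_1(\star,A)$, so $t\star a, a\star t \in \zeta_1(\star,A)$; moreover $t \in A^2 \leq \zeta_2(\star,A)$? — this is exactly where one must be careful, since in general $A^2$ and $\zeta_i(\star,A)$ are different series, and reconciling them is the crux. The plan is to show directly that $A^2 \leq \zeta_2(\star,A)$ under these hypotheses (using one-generation: $A/A^2$ is cyclic hence $\star$-trivial-ish, $A^3\leq\zeta_1$, and a dimension/length count forces $A^2\leq\zeta_2$), and then compute $t\star t$ by expanding $t = a\star a = aa - 2a$ (additively) and pushing the $\star$ through, repeatedly absorbing terms that land in $A^3\star A^2 = 0$ or in $\zeta_1(\star,A)\star$ anything $= 0$. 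A short manipulation of this kind should collapse $t\star t$ to $0$.

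The main obstacle I anticipate is precisely the comparison between the lower series $A^n$ and the upper $\star$-central series $\zeta_n(\star,A)$: the statement is phrased in terms of $\zeta_3$, but $A^2\star A^2 = 0$ is a statement about the lower series, and bridging these requires either a clean lemma that for one-generator braces $A = \zeta_c(\star,A)$ implies $A^{c+1} = 0$ together with sharp containments $A^i \le \zeta_{c+1-i}(\star,A)$, or an explicit hands-on computation. A secondary subtlety is that biadditivity of $\star$ only holds modulo correction terms, so every ``bilinear'' reduction has to be checked to actually terminate — which it does here only because $A^4 = 0$. I would handle the correction terms by always working in the associated graded pieces $A^i/A^{i+1}$ first, establishing $t\star t \in A^4 = 0$ there, and then lifting. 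If the direct computation of $t\star t$ proves too messy, the fallback is to invoke Theorem~\ref{t:maina}-style normal forms: realize $A$ as a quotient of an explicit universal one-generator brace of $\star$-length $3$ and verify $A^2\star A^2 = 0$ in that universal object, which reduces everything to a finite (if tedious) check.
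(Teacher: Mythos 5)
Your overall strategy---reduce everything by centrality to the single identity $(a\star a)\star(a\star a)=0$---is close in spirit to the paper, but as written it has two genuine gaps. The smaller one: the decomposition $A^2=\langle t\rangle_+ +A^3$ is false, because $\star$ is additive only in its second argument; replacing the \emph{first} argument of $x\star y$ by a multiple of $a$ modulo $A^2$ produces correction terms lying in $A^2\star A=A^{(3)}$, which need not be contained in $A^3=A\star A^2$. Already in the paper's model $D(1,3)$ the left ideal $A^3$ is additively generated by $a\star(a\star a)$, while $(a\star a)\star a$ lies outside $\langle t\rangle_+ +A^3$. This slip is repairable: since $A=\zen{3}{A}$ gives $A\star A\le\zen{2}{A}$ immediately (so your worry about proving $A^2\le\zen{2}{A}$ by a ``length count'' is unnecessary), both $A^3$ and $A^{(3)}$ lie in $\zet{A}$, and the corrected decomposition $A^2=\langle t\rangle_+ +A^3+A^{(3)}$ still reduces the theorem to $t\star t=0$, provided you also justify that $A/A^2$ is additively generated by $a+A^2$ (quick, since $A^{(2)}=A^2$ and $A/A^{(2)}$ is a trivial one-generator brace) and that $(kt)\star t=k(t\star t)$ (the paper's Lemma~\ref{l:newlem2}(vi)).

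The serious gap is that $t\star t=0$ is the heart of the matter and you never prove it. The mechanism you describe---expand $t=aa-2a$ and absorb terms into $A^3\star A^2=0$ or $\zet{A}\star A=0$---cannot succeed by itself: expanding in the second argument merely returns $t\star t$ (since $a^2=2a+t$ gives $t\star a^2=2(t\star a)+t\star t$), and there is no formal containment $A^2\star A^2\subseteq A^4$, so your plan of ``establishing $t\star t\in A^4=0$ in the graded pieces'' has no basis; if such a containment held formally, the theorem would need no hypotheses. What is actually required (the paper's Lemma~\ref{l:newlem3}) is an argument using the multiplicative structure: from $a^2=2a+t$ one gets $t\cdot a^2=2(t\star a)+t\star t+2a+2t$, while writing $ta=w(at)$ with $w\in\zet{A}$ (possible because $ta-at\in\zet{A}$) and applying $(xy)\star z=x\star(y\star z)+y\star z+x\star z$ (Lemma~\ref{l:lem2}(ii)) yields $(ta)a=2(t\star a)+2t+2a$; associativity then forces $t\star t=0$. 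Your fallback---verifying the identity in a universal one-generator brace of $\star$-length $3$---is circular here, since constructing that object and mapping $A$ onto it is precisely Theorem~\ref{t:mainc}, whose proof rests on this computation.
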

 
We shall let binomial coefficients be denoted $\begin{pmatrix} m\\ i\end{pmatrix} $ as usual.
Next we let $n$ be a fixed positive integer and let $\mathbb{Z}_n=\mathbb{Z}/n\mathbb{Z}$ denote the ring of integers modulo $n$. Let 
\[
D(1,3)=\mathbb{Z}_n\times\mathbb{Z}_n\times\mathbb{Z}_n\times\mathbb{Z}_n
\]
where addition and multiplication are defined by
\begin{align*}
(m_1, m_2,m_3,m_4)+(n_1,n_2,n_3,n_4)&=(k_1, k_2,k_3,k_4)
\end{align*}
and
\begin{align*}
&(m_1, m_2,m_3,m_4)\cdot (n_1,n_2,n_3,n_4)\\
&=(k_1,k_2+m_1n_1,
k_3+m_1n_2+m_2n_1,k_4+m_2n_1-\begin{pmatrix} m_1\\ 2\end{pmatrix} n_1)
\end{align*}
where $k_i=m_i+n_i$ and $m_i,n_i\in \mathbb{Z}_n$ for $i=1,\dots,4$.

Concerning $D(1,3)$ we shall prove the following result.

\begin{thm} \label{t:mainc}
\begin{enumerate}
\item[ (i)] The set $D(1,3)$ with the addition and multiplication as defined is a one-generator, Smoktunowicz-nilpotent left brace and $\zl{D(1,3)}=3$.  Furthermore the orders of the elements of the additive group of $D(1,3)$ are divisors of $n$ and $D(1,3)^2$ is abelian.
\item[(ii)] Let $A$ be a non-abelian one-generator left brace whose generator $a$ satisfies $na=0$ for some positive integer $n$.  If $A=\zen{3}{A}$,  %and $A^2$ is abelian, 
then $A$ is an epimorphic image of  the left brace $D(1,3)$.
\end{enumerate}
\end{thm}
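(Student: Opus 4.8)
\noindent\emph{Proof strategy.}
For part~(i) the plan is a direct verification followed by explicit computation of the canonical series. Axiom LB1 is immediate because $(D(1,3),+)=\mathbb{Z}_n^4$. For LB2 one checks by a routine substitution that the displayed multiplication is associative, that $(0,0,0,0)$ is a two-sided identity and that every $(m_1,m_2,m_3,m_4)$ is invertible, its inverse having coordinates that are explicit integer polynomials in the $m_i$. Axiom LB3, namely $x(y+z)=xy+xz-x$, is checked coordinate by coordinate, the only slightly delicate case being the last coordinate, where one uses that $\binom{m_1}{2}$ depends only on $m_1$. Writing $w=(1,0,0,0)$ one computes $w\star w=(0,1,0,0)$, $w\star(w\star w)=(0,0,1,0)$ and $(w\star w)\star w=(0,0,1,1)$; these, together with $w$, span $\mathbb{Z}_n^4$, so $\br{w}=D(1,3)$ and the brace is one-generated, while every additive order divides $n$. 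A short computation shows $x\star y=(0,m_1n_1,m_1n_2+m_2n_1,m_2n_1-\binom{m_1}{2}n_1)$, whence $D(1,3)\star D(1,3)=\{(0,m_2,m_3,m_4)\}$ and $u\star v=0$ for all $u,v$ in this set, so $D(1,3)^2$ is a trivial, hence abelian, brace. Finally one computes the upper $\star$-central series directly: $\zen{1}{D(1,3)}=\{(0,0,m_3,m_4)\}$, $\zen{2}{D(1,3)}=\{(0,m_2,m_3,m_4)\}$ and $\zen{3}{D(1,3)}=D(1,3)$, so $\zl{D(1,3)}=3$; as this is finite, $D(1,3)$ is $\star$-nilpotent and hence Smoktunowicz-nilpotent by the equivalence recalled in Section~\ref{s:intro}.

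For part~(ii) let $A=\br{a}$ be non-abelian with $na=0$ and $A=\zen{3}{A}$. Recall that every $\la_x$ is an automorphism of $(A,+)$; hence $\star$ is additive in its second argument, and since $\la_x(0)=0$ the element $n$ annihilates each of $a$, $a\star a$, $a\star(a\star a)$ and $(a\star a)\star a$. From $A=\zen{3}{A}$ we get $A^{(2)}=A\star A\leq\zen{2}{A}$, hence $A^{(3)}=A^{(2)}\star A\leq\zen{1}{A}$ and $A^{(4)}=0$, and likewise $A^4=0$; by Theorem~\ref{t:mainb}, $A^2$ is abelian, so $A^2\star A^2=0$. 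Using these facts together with the identity $(xy)\star z=x\star(y\star z)+x\star z+y\star z$ (a consequence of $\la_{xy}=\la_x\la_y$), one shows: every $\star$-monomial in $a$ of weight at least $4$ vanishes; $\star$ is additive in its first argument whenever that argument lies in $A^2$; and $A^2$ is generated, as an additive group, by $a\star a$, $a\star(a\star a)$ and $(a\star a)\star a$. Since $A/A^2$ is a trivial one-generator brace we have $(A,+)=\langle a\rangle+A^2$, so $(A,+)$ is generated by the four $n$-torsion elements $a$, $a\star a$, $a\star(a\star a)$, $(a\star a)\star a$.

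There is therefore a surjective homomorphism of additive groups $\phi\colon(D(1,3),+)\to(A,+)$ sending $w$, $w\star w$, $w\star(w\star w)$, $(w\star w)\star w$ to $a$, $a\star a$, $a\star(a\star a)$, $(a\star a)\star a$ respectively; it is well defined because, by the computation in part~(i), the only $\mathbb{Z}$-relations among those four elements of $D(1,3)$ are those forced by $n\cdot e_i=0$, and these hold in $A$ by the previous paragraph. It remains to check that $\phi$ is a brace homomorphism, that is, $\phi(x\star y)=\phi(x)\star\phi(y)$ for all $x,y$. Expanding $x$ and $y$ in the four spanning elements and using additivity of $\star$ in the second variable and in the first variable on $A^2$, this reduces to a finite computation: one evaluates in $A$ the products $a\cdot a$, $a\cdot(a\star a)$, $(a\star a)\cdot a$, $\dots$ and, iterating, the powers $a^m$, by means of the brace axioms, the vanishing relations above and the abelianness of $A^2$, and one checks that the resulting structure constants are exactly those read off from the multiplication defining $D(1,3)$; in particular the powers $a^m$ acquire the coefficients $\binom{m}{2}$ and $\binom{m}{3}$, which is where the term $-\binom{m_1}{2}n_1$ in the definition of $D(1,3)$ originates. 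Finally $\phi$ is onto because $a=\phi(w)$ generates $A$, so $A\cong D(1,3)/\ker\phi$, as required.

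The step I expect to be the main obstacle is the reduction in part~(ii) showing that $A^2$ is generated additively by the three listed $\star$-monomials and that all higher-weight monomials vanish: this is precisely where the brace identities must be played against $A^2$ abelian and $A=\zen{3}{A}$, and it is what cuts the additive group of $A$ down to a quotient of $\mathbb{Z}_n^4$. The subsequent verification that $\phi$ respects multiplication, though lengthy, is the bookkeeping already encoded in the definition of $D(1,3)$; and in part~(i) the corresponding difficulty is purely computational, lying in the associativity check and the last coordinate of LB3.
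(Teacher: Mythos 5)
Your proposal is correct and follows essentially the same route as the paper: part (i) is the direct verification carried out in Proposition~\ref{p:prop5}, and part (ii) mirrors the proofs of Theorems~\ref{t:mainb} and~\ref{t:mainc} --- $n$-torsion of $a\star a$, $a\star(a\star a)$, $(a\star a)\star a$ via additivity of $\lambda_a$, abelianness of $A^2$ from Theorem~\ref{t:mainb}, the additive spanning of $A$ by these elements together with $a$, the binomial-coefficient structure constants obtained from powers of $a$, and the evident epimorphism from $D(1,3)$. The only cosmetic difference is that you get surjectivity from ``$A^2$ is spanned by the three $\star$-monomials and $A/A^2$ is cyclic,'' whereas the paper checks directly that the additive span of $a$, $a\star a$, $a\star(a\star a)$, $(a\star a)\star a$ is a subbrace containing $a$; the underlying computations are identical.
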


%We shall also prove the following result.

%\begin{thm}\label{t:mainc}
%Let $A$ be a one-generator %non-abelian 
%l%eft brace, generated by an element $a$. % satisfying $na=0$ %for some positive integer $n$. 
%If $A=\zen{3}{A}$, then $A^2$ is abelian.
%\end{thm}
 
 \section{Preliminary Results}\label{s:prelim}
 
 We shall need some of the following properties of $\star$ and $\lambda_a$ whose proofs can be found in \cite{fC18} or \cite{JVV22}.

\begin{lemma}\label{l:lem2}
Let $A$ be a left brace.  Then
\begin{enumerate}
\item[  (i)] $a\star (b+c)=a\star b+ a\star c$;
\item[ (ii)] $(ab)\star c=a\star (b\star c)+b\star c+ a\star c$;
\item[(iii)] $(a+b)\star c=a\star (\lambda_{a^{-1}}(b)\star c) +(\lambda_{a^{-1}}(b)\star c)+a\star c$;
\item[ (iv)] $\lambda_y(b\star a)=yby^{-1}\star \lambda_y(a)$;
\item[  (v)] $yby^{-1}=\lambda_y(\lambda_b(y^{-1}) -y^{-1} +b)=\lambda_y(b\star y^{-1} +b)$
\end{enumerate}
for all elements $a,b,c,y\in A$.
\end{lemma}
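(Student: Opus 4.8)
The plan is to reduce every assertion to manipulations of the maps $\lambda_a$, using throughout the identity $a\star b=\lambda_a(b)-b$ recorded in the excerpt. First I would isolate the two structural facts on which everything rests. The first is that each $\lambda_a$ is an endomorphism of the additive group $(A,+)$: axiom LB3 gives $\lambda_a(b+c)=a(b+c)-a=ab+ac-a-a=\lambda_a(b)+\lambda_a(c)$, so $\lambda_a$ is additive, and being invertible with inverse $\lambda_{a^{-1}}$ it is an automorphism. The second is that $a\mapsto\lambda_a$ is a homomorphism from $(A,\cdot)$ into $\operatorname{Aut}(A,+)$; concretely $\lambda_{ab}=\lambda_a\lambda_b$ and $\lambda_1=\mathrm{id}$. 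The homomorphism property follows from additivity together with associativity of multiplication: $\lambda_a(\lambda_b(x))=\lambda_a(bx)-\lambda_a(b)=(a(bx)-a)-(ab-a)=(ab)x-ab=\lambda_{ab}(x)$, while $\lambda_1(x)=1\cdot x-1=x$ because the common neutral element satisfies $1=0$. Note that these give $\lambda_{a^{-1}}=\lambda_a^{-1}$. Once these two facts are in hand, each part becomes a short computation.

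For (i), additivity of $\lambda_a$ gives $a\star(b+c)=\lambda_a(b+c)-(b+c)=(\lambda_a(b)-b)+(\lambda_a(c)-c)=a\star b+a\star c$. For (ii), I would write $\lambda_b(c)=b\star c+c$ and expand $(ab)\star c=\lambda_{ab}(c)-c=\lambda_a(\lambda_b(c))-c=\lambda_a(b\star c)+\lambda_a(c)-c$, then substitute $\lambda_a(b\star c)=a\star(b\star c)+(b\star c)$ and $\lambda_a(c)=a\star c+c$ and cancel the spare $c$. Part (iii) is just (ii) applied to a rewritten product: the identity $a+b=a\,\lambda_{a^{-1}}(b)$, which follows from $a\,\lambda_{a^{-1}}(b)=\lambda_a(\lambda_{a^{-1}}(b))+a=\lambda_1(b)+a=b+a$, lets me substitute $ab\rightsquigarrow a\,\lambda_{a^{-1}}(b)$ into the statement of (ii).

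Parts (iv) and (v) are where the homomorphism law does the real work. For (iv) I would expand both sides into $\lambda$'s: the left side is $\lambda_y(\lambda_b(a)-a)=\lambda_{yb}(a)-\lambda_y(a)$, and the right side is $\lambda_{yby^{-1}}(\lambda_y(a))-\lambda_y(a)=\lambda_{yby^{-1}\cdot y}(a)-\lambda_y(a)=\lambda_{yb}(a)-\lambda_y(a)$, so the two agree. For (v) the second equality is immediate from $b\star y^{-1}=\lambda_b(y^{-1})-y^{-1}$; for the first I apply additivity of $\lambda_y$ together with the computations $\lambda_y(\lambda_b(y^{-1}))=\lambda_{yb}(y^{-1})=yby^{-1}-yb$, then $\lambda_y(y^{-1})=yy^{-1}-y=-y$, and $\lambda_y(b)=yb-y$, whereupon the sum $(yby^{-1}-yb)-(-y)+(yb-y)$ collapses to $yby^{-1}$. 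I do not anticipate a genuine obstacle: the only points demanding care are the bookkeeping forced by the convention $1=0$, and making sure the homomorphism identity $\lambda_{ab}=\lambda_a\lambda_b$, hence $\lambda_{a^{-1}}=\lambda_a^{-1}$, is firmly established before it is invoked in parts (iii)--(v).
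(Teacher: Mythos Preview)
Your argument is correct in every part. The two structural facts you isolate---additivity of each $\lambda_a$ and the homomorphism property $\lambda_{ab}=\lambda_a\lambda_b$---are exactly the right tools, and each of your five computations goes through as written. In particular, the derivations of (iv) and (v) via $\lambda_{yb}=\lambda_{yby^{-1}}\lambda_y$ and the explicit evaluation $\lambda_y(y^{-1})=-y$ are clean and accurate.

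As for comparison with the paper: there is nothing to compare. The paper does not prove this lemma at all; it states the five identities and refers the reader to \cite{fC18} and \cite{JVV22} for proofs. Your write-up is precisely the standard verification one finds in those sources, so you have supplied what the paper chose to omit.
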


We shall also require the following results, the first of which is well-known and its proof is omitted.

\begin{lemma}\label{l:lem1}
Let $n,k$ be natural numbers and let $a, b_1,b_2,\dots, b_n, c_1,c_2,\dots, c_k$ be elements of the  left brace $A$.  Then 
\[
a(b_1+\dots+ b_n-c_1-\dots - c_k)=ab_1+ab_2+\dots +ab_n-ac_1-\dots -ac_k +(k-n+1)a.
\]
\end{lemma}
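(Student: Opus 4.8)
The plan is to reduce the identity to repeated applications of axiom~LB3, treating $(A,+)$ simply as an abelian group in which subtraction is the inverse operation of addition. The first step is to record the purely additive special case: for any elements $x_1,\dots,x_m\in A$,
\[
a(x_1+\dots+x_m)=ax_1+\dots+ax_m-(m-1)a.
\]
This follows by a routine induction on $m$; the case $m=1$ is trivial, and the inductive step is obtained by writing $x_1+\dots+x_{m+1}=(x_1+\dots+x_m)+x_{m+1}$ and applying LB3 once, the extra summand $-a$ supplied by LB3 turning $-(m-1)a$ into $-ma$.

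To bring in the negative terms I would not attempt to distribute directly over a difference, but instead use the identity in the additive group $(A,+)$
\[
b_1+\dots+b_n=\bigl(b_1+\dots+b_n-c_1-\dots-c_k\bigr)+c_1+\dots+c_k.
\]
Writing $S$ for the bracketed element and applying the formula of the first step to each side gives $ab_1+\dots+ab_n-(n-1)a$ on the left and $aS+ac_1+\dots+ac_k-ka$ on the right (the right-hand side being a sum of $k+1$ terms). Solving this equation for $aS$ in the abelian group $(A,+)$ yields exactly
\[
aS=ab_1+\dots+ab_n-ac_1-\dots-ac_k+(k-n+1)a,
\]
as asserted.

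There is essentially no obstacle in this argument; it is a bookkeeping exercise rather than a conceptual one. The one point worth keeping in mind is the feature of braces that distinguishes them from associative rings: the additive zero coincides with the multiplicative identity, so that $a\cdot 0=a$ rather than $0$, and this is precisely why the correction term $(k-n+1)a$ genuinely appears instead of collapsing. Finally, the hypothesis that $n$ and $k$ are natural numbers is used only so that the rearrangement above has the stated form; if one allows no $b$'s or no $c$'s, the statement reduces to the first step or is vacuous.
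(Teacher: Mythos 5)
Your proof is correct. The paper itself omits the proof of this lemma (it is stated as well-known), so there is nothing to compare against; your argument --- first distributing $a$ over an $m$-term sum by induction on LB3, then handling the subtracted terms by writing $b_1+\dots+b_n=S+c_1+\dots+c_k$ and solving for $aS$ in the abelian group $(A,+)$ --- is exactly the standard bookkeeping the authors had in mind, and your remark that $a\cdot 0=a$ (so the correction term $(k-n+1)a$ does not collapse) correctly identifies the only brace-specific subtlety.
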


The next two results have appeared in \cite{BEKP24} but we include the proofs for the sake of completeness.  We observe the well-known fact that $\begin{pmatrix} n+1\\ i+1\end{pmatrix}=\begin{pmatrix} n\\ i\end{pmatrix}+ \begin{pmatrix} n\\ i+1\end{pmatrix}$. 

\begin{proposition}\label{p:prop1}
Let $A$ be a left brace and let $a=a_1\in A$. Let $a_{n+1}=a_1\star a_n$ for all natural numbers $n$. Then
\[
a_1^n=\begin{pmatrix} n\\ 1\end{pmatrix}a_1+ \begin{pmatrix} n\\ 2\end{pmatrix}a_2+\dots +\begin{pmatrix} n\\ n-1\end{pmatrix}a_{n-1}+a_n
\]
for all natural numbers $n$.
\end{proposition}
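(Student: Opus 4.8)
The plan is to argue by induction on $n$. The base case $n=1$ is immediate, since there the asserted identity just reads $a_1^1=a_1$. For the inductive step, suppose the formula holds for some $n\geq 1$; it is convenient to rewrite it compactly as
\[
a_1^n=\binom{n}{1}a_1+\binom{n}{2}a_2+\dots+\binom{n}{n}a_n,
\]
where the final coefficient $\binom{n}{n}=1$ absorbs the lone term $a_n$. Then $a_1^{n+1}=a_1\,a_1^n=a_1\bigl(\binom{n}{1}a_1+\dots+\binom{n}{n}a_n\bigr)$, and I would expand this by regarding the right-hand factor as a sum of $N:=\binom{n}{1}+\dots+\binom{n}{n}=2^n-1$ basic summands, namely each $a_i$ repeated $\binom{n}{i}$ times, and then applying Lemma~\ref{l:lem1} with $k=0$ and with the role of $n$ there played by $N$. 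This yields
\[
a_1^{n+1}=\sum_{i=1}^n\binom{n}{i}\,(a_1 a_i)-(N-1)a_1=\sum_{i=1}^n\binom{n}{i}\,(a_1 a_i)-(2^n-2)a_1.
\]

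Next I would substitute $a_1 a_i=a_1\star a_i+a_1+a_i=a_{i+1}+a_1+a_i$, which follows directly from the definition of $\star$ together with $a_{i+1}=a_1\star a_i$, obtaining
\[
a_1^{n+1}=\sum_{i=1}^n\binom{n}{i}a_{i+1}+\Bigl(\sum_{i=1}^n\binom{n}{i}\Bigr)a_1+\sum_{i=1}^n\binom{n}{i}a_i-(2^n-2)a_1.
\]
Since $\sum_{i=1}^n\binom{n}{i}=2^n-1$, the scattered copies of $a_1$ collapse to a single $a_1$. After reindexing the first sum by $j=i+1$, the coefficient of $a_j$ for $2\leq j\leq n$ becomes $\binom{n}{j-1}+\binom{n}{j}$, the coefficient of $a_1$ becomes $1+\binom{n}{1}$, and the coefficient of $a_{n+1}$ is $\binom{n}{n}=1$. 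Invoking Pascal's rule $\binom{n}{j-1}+\binom{n}{j}=\binom{n+1}{j}$ noted before the statement, together with $1+\binom{n}{1}=\binom{n+1}{1}$ and $1=\binom{n+1}{n+1}$, gives precisely $a_1^{n+1}=\binom{n+1}{1}a_1+\dots+\binom{n+1}{n}a_n+a_{n+1}$, which is the required formula for $n+1$.

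The only delicate point is the bookkeeping around the correction term in Lemma~\ref{l:lem1}: one must verify that the term $-(2^n-2)a_1$ produced by distributing $a_1$ over $2^n-1$ summands cancels exactly all but one of the $2^n-1$ spurious copies of $a_1$ arising from $a_1 a_i=a_{i+1}+a_1+a_i$. Everything else is routine manipulation of binomial coefficients and free use of the commutativity of $+$; no brace axiom beyond LB3 (through Lemma~\ref{l:lem1}) and the definition of $\star$ is needed.
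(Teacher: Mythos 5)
Your proof is correct and follows essentially the same route as the paper's: induction on $n$, expansion of $a_1\cdot a_1^n$ via Lemma~\ref{l:lem1}, substitution of $a_1a_i=a_1+a_i+a_{i+1}$ (from the definition of $\star$), and collection of coefficients via Pascal's rule. Your explicit bookkeeping of the correction term $-(2^n-2)a_1$ is just a more verbose account of the same cancellation the paper performs.
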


\begin{proof}
First we note that for all natural numbers $n$ we have $a_{n+1}=a_1\star a_n=a_1a_n-a_1-a_n$. Hence $a_1a_n=a_1+a_n+a_{n+1}$ for all such $n$. We use induction on $n$ and observe that the case $n=1$ is clear.  We assume the natural  induction hypothesis. Then we have, using the induction hypothesis, Lemma~\ref{l:lem1}  and the remarks above,
\begin{align*}
a_1^{n+1}& =a_1\cdot a_1^n =a_1\left(\begin{pmatrix} n\\ 1\end{pmatrix}a_1+\begin{pmatrix} n\\ 2\end{pmatrix}a_2+\dots +\begin{pmatrix} n\\ n-1\end{pmatrix}a_{n-1}+a_n\right)\\
&= \begin{pmatrix} n\\ 1\end{pmatrix}a_1a_1 +\begin{pmatrix} n\\ 2\end{pmatrix}a_1a_2+\dots + \begin{pmatrix} n\\ n-1\end{pmatrix}a_1a_{n-1}+a_1a_n\\
&-\left(\begin{pmatrix} n\\ 1\end{pmatrix}+\dots+\begin{pmatrix} n\\ n-1\end{pmatrix}\right)a_1\\
&=\begin{pmatrix} n\\ 1\end{pmatrix}a_1+\begin{pmatrix} n\\ 1\end{pmatrix}a_2+\begin{pmatrix} n\\ 2\end{pmatrix}a_2+\dots +\begin{pmatrix} n\\ n-2\end{pmatrix}a_{n-1}+\begin{pmatrix} n\\ n-1\end{pmatrix}a_{n-1}\\
&+\begin{pmatrix} n\\ n-1\end{pmatrix}a_n+a_1+a_n+a_{n+1}\\
&=\begin{pmatrix} n+1\\ 1\end{pmatrix}a_1+\begin{pmatrix} n+1\\ 2\end{pmatrix}a_2+\dots +\begin{pmatrix} n+1\\ n\end{pmatrix}a_n+a_{n+1}.
\end{align*}
This proves the result by induction.
\end{proof}

\begin{proposition}\label{p:prop2}
Let $A$ be a left brace and let $a=a_1$ be an arbitrary element of $A$.  Let $a_{n+1}=a_1\star a_n$
for all natural numbers $n$. Then
\[
a_1^n\star a_1= \begin{pmatrix} n\\ 1\end{pmatrix}a_2+ \begin{pmatrix} n\\ 2\end{pmatrix}a_3+\dots +\begin{pmatrix} n\\ n-1\end{pmatrix}a_{n}+a_{n+1}.
\]
\end{proposition}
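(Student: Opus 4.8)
The plan is to obtain this as an immediate consequence of Proposition~\ref{p:prop1}. In any left brace one has $xy=x+y+x\star y$; applying this with $x=a_1^n$ and $y=a_1$, and using $a_1^n\cdot a_1=a_1^{n+1}$, gives
\[
a_1^n\star a_1=a_1^{n+1}-a_1^n-a_1 .
\]
So the whole argument reduces to expanding $a_1^{n+1}$ and $a_1^n$ via Proposition~\ref{p:prop1} and reading off, for each $i$, the coefficient of $a_i$ in the difference.

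The bulk of the work is then the coefficient bookkeeping. For $2\le i\le n-1$ the coefficient of $a_i$ is $\begin{pmatrix} n+1\\ i\end{pmatrix}-\begin{pmatrix} n\\ i\end{pmatrix}$, which equals $\begin{pmatrix} n\\ i-1\end{pmatrix}$ by the Pascal relation recalled just before Proposition~\ref{p:prop1}. The three boundary contributions must be checked by hand: the coefficient of $a_1$ is $\begin{pmatrix} n+1\\ 1\end{pmatrix}-\begin{pmatrix} n\\ 1\end{pmatrix}-1=0$; the coefficient of $a_n$ is $\begin{pmatrix} n+1\\ n\end{pmatrix}-1=\begin{pmatrix} n\\ n-1\end{pmatrix}$ (since $a_n$ carries coefficient $1$ in the expansion of $a_1^n$, while it carries coefficient $\begin{pmatrix} n+1\\ n\end{pmatrix}$ in that of $a_1^{n+1}$); and $a_{n+1}$, occurring only in the expansion of $a_1^{n+1}$, survives with coefficient $1$. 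After shifting the index $i\mapsto i-1$ this is precisely the claimed identity, and the degenerate case $n=1$ (right-hand side just $a_2=a_1\star a_1$) is immediate.

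I do not foresee a real obstacle here; the only point needing care is exactly those three boundary coefficients, where the generic Pascal cancellation does not literally apply and where an off-by-one slip is easy to make. As an alternative one could instead run an induction on $n$ modelled on the proof of Proposition~\ref{p:prop1}, using Lemma~\ref{l:lem2}(ii) in the form $(a_1\cdot a_1^{n-1})\star a_1=a_1\star(a_1^{n-1}\star a_1)+(a_1^{n-1}\star a_1)+a_1\star a_1$ together with Lemma~\ref{l:lem2}(i) and $a_1\star a_m=a_{m+1}$; but the subtraction route is shorter since it recycles Proposition~\ref{p:prop1} verbatim.
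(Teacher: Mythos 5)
Your proposal is correct and is essentially the paper's own argument: the paper likewise writes $a_1^n\star a_1=a_1^n\cdot a_1-a_1^n-a_1$, expands via Proposition~\ref{p:prop1}, and collects coefficients using the Pascal identity, with the same boundary checks for $a_1$, $a_n$ and $a_{n+1}$. Your coefficient bookkeeping is accurate, so no changes are needed.
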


\begin{proof}
Using  Proposition~\ref{p:prop1} %and the fact that $\begin{pmatrix} n+1\\ i+1\end{pmatrix}%=\begin{pmatrix} n\\ i\end{pmatrix}+ \begin{pmatrix} n\\ i+1\end{pmatrix}$ 
we have
\begin{align*}
a_1^n\star a_1&=a_1^n\cdot a_1-a_1^n-a_1\\
&=\begin{pmatrix} n+1\\ 1\end{pmatrix}a_1+ \begin{pmatrix} n+1\\ 2\end{pmatrix}a_2+\dots + \begin{pmatrix} n+1\\ n\end{pmatrix}a_n+ a_{n+1}\\
&-\begin{pmatrix} n\\ 1\end{pmatrix}a_1-\begin{pmatrix} n\\ 2\end{pmatrix}a_2-\dots -\begin{pmatrix} n\\ n-1\end{pmatrix}a_{n-1}-a_n-a_1\\
&=\left(\begin{pmatrix} n+1\\ 1\end{pmatrix}-\begin{pmatrix} n\\ 1\end{pmatrix}-1\right)a_1+\left(\begin{pmatrix} n+1\\ 2\end{pmatrix}-\begin{pmatrix} n\\ 2\end{pmatrix}\right)a_2+\dots\\
&+ \left(\begin{pmatrix} n+1\\ n\end{pmatrix}-\begin{pmatrix} n\\ n\end{pmatrix}\right)a_n+a_{n+1}\\
&=\begin{pmatrix} n\\ 1\end{pmatrix}a_2+\dots +\begin{pmatrix} n\\ n-1\end{pmatrix}a_n+a_{n+1}.
\end{align*}
This concludes the proof.
\end{proof}

\begin{proposition}\label{p:prop4}
The set $D(1,2)$ with the addition and multiplication as defined is a one-generator, Smoktunowicz-nilpotent left brace and $\zl{D(1,2)}=2$.
\end{proposition}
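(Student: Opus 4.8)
The plan is to verify the brace axioms LB1--LB3 by direct computation, exhibit a single generator, and then read off the whole $\star$-structure from one simple identity. Axiom LB1 is immediate, since $(D(1,2),+)$ is just the direct sum $\mathbb{Z}\oplus\mathbb{Z}$. For LB2 one checks that $(0,0)$ is the multiplicative identity, that $(m_1,m_2)^{-1}=(-m_1,\,m_1^2-m_2)$ (a quick two-line check on both sides), and that multiplication is associative: both $\big((m_1,m_2)(n_1,n_2)\big)(p_1,p_2)$ and $(m_1,m_2)\big((n_1,n_2)(p_1,p_2)\big)$ expand to $\big(m_1+n_1+p_1,\ m_2+n_2+p_2+m_1n_1+m_1p_1+n_1p_1\big)$. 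For LB3, both $(a_1,a_2)\big((b_1,b_2)+(c_1,c_2)\big)$ and $(a_1,a_2)(b_1,b_2)+(a_1,a_2)(c_1,c_2)-(a_1,a_2)$ equal $\big(a_1+b_1+c_1,\ a_2+b_2+c_2+a_1b_1+a_1c_1\big)$. Hence $D(1,2)$ is a left brace.

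The key observation, which drives everything else, is the formula
\[
(x_1,x_2)\star(y_1,y_2)=(x_1,x_2)(y_1,y_2)-(x_1,x_2)-(y_1,y_2)=(0,\,x_1y_1)
\]
valid for all elements. Taking $a=(1,0)$ we get $a\star a=(0,1)$, so the subbrace $\br{a}$ contains both $(1,0)$ and $(0,1)$, hence contains the whole additive group $\mathbb{Z}\oplus\mathbb{Z}$; since $D(1,2)$ is the ambient brace this forces $\br{a}=D(1,2)$, so $D(1,2)$ is one-generator.

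Next I would extract the $\star$-invariants from the displayed identity. Since $x\star y$ always lies in $0\oplus\mathbb{Z}$ and $(1,0)\star(1,0)=(0,1)$, we get $D(1,2)^2=D(1,2)\star D(1,2)=0\oplus\mathbb{Z}$; and since every element of $0\oplus\mathbb{Z}$ has first coordinate $0$, the formula immediately yields $D(1,2)^{(3)}=0=D(1,2)^{3}$, so $D(1,2)$ is Smoktunowicz-nilpotent (one may also invoke the stated equivalence with $\star$-nilpotency). For the $\star$-center: by the same formula $(a_1,a_2)\in\zet{D(1,2)}$ iff $a_1y_1=0=y_1a_1$ for all $y_1\in\mathbb{Z}$, i.e. iff $a_1=0$, so $\zet{D(1,2)}=0\oplus\mathbb{Z}$. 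Passing to the quotient, addition and multiplication agree modulo $0\oplus\mathbb{Z}$, so $D(1,2)/\zet{D(1,2)}$ is a trivial brace, whence $\zen{2}{D(1,2)}=D(1,2)$. As $D(1,2)$ is non-abelian (e.g. $(1,0)\star(1,0)\ne 0$) we have $\zet{D(1,2)}\ne D(1,2)$, so $\zl{D(1,2)}=2$ exactly.

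I do not expect a genuine obstacle: the entire argument is a chain of direct verifications, and the only place one could slip is the bookkeeping in the associativity and LB3 expansions. Once the single identity $(x_1,x_2)\star(y_1,y_2)=(0,x_1y_1)$ is in hand, the one-generator property, Smoktunowicz-nilpotency, and the value $\zl{D(1,2)}=2$ all follow with essentially no further computation.
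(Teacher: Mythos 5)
Your proposal is correct and follows essentially the same route as the paper: direct verification of LB1--LB3, the key identity $(x_1,x_2)\star(y_1,y_2)=(0,x_1y_1)$, the generator $(1,0)$ together with $(1,0)\star(1,0)=(0,1)$, and the identification of $0\oplus\mathbb{Z}$ with the $\star$-center to get $\zl{D(1,2)}=2$. The only (harmless) difference is that you additionally verify $D(1,2)^{(3)}=0=D(1,2)^3$ directly, whereas the paper deduces Smoktunowicz-nilpotency from $\star$-nilpotency via the stated equivalence.
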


\begin{proof}
We refer to \eqref{eq:defD12}.  Since the addition in $D(1,2)$ is the usual addition of $\mathbb{Z}\times \mathbb{Z}$ it is clear that $D(1,2)$ is an abelian group under addition with additive identity $(0,0)$.  Next we observe that 
\[
(0,0)(n_1,n_2)=(0+n_1, 0+n_2+0\cdot n_2)=(n_1,n_2)=(n_1,n_2)(0,0)
\]
so $(0,0)$ is a multiplicative identity. 

It is easy to see that the associative law for multiplication holds in $D(1,2)$ and we omit this computation. Furthermore $(n_1,n_2)^{-1}=(-n_1,n_1^2-n_2)$ since
\begin{align*}
(n_1,n_2)(-n_1,n_1^2-n_2)&=(n_1-n_1,n_2 +n_1^2-n_2-n_1^2)\\
&=(0,0)=(-n_1,n_1^2-n_2)(n_1,n_2).
\end{align*}
Hence $D(1,2)$ is a group under multiplication. Indeed the multiplication is commutative also, by \eqref{eq:defD12}.

Finally we note that $D(1,2)$ is a left brace since
\[
(l_1,l_2)((m_1,m_2)+(n_1,n_2))=(l_1+m_1+n_1,l_2+m_2+n_2+l_1(m_1+n_1))
\]
whereas 
\begin{align*}
&(l_1,l_2)(m_1,m_2)+(l_1,l_2)(n_1,n_2)-(l_1,l_2) \\
&=(l_1+m_1,l_2+m_2+m_1l_1) + (l_1+n_1,l_2+n_2+l_1n_1)-(l_1,l_2)\\
&=(l_1+m_1+n_1,l_2+m_2+n_2+l_1m_1+l_1n_1)
\end{align*}
so (LB3) is satisfied.

It also follows that 
\begin{align*}
&(l_1,l_2)\star (m_1,m_2)=(l_1,l_2)(m_1,m_2)-(l_1,l_2)-(m_1,m_2)\\
&= (l_1+m_1,l_2+m_2+l_1m_1)-(l_1,l_2)-(m_1,m_2)=(0,l_1m_1).
\end{align*}

Let $E=\{(0,k)|k\in\mathbb{Z}\}$.  Then $D(1,2)^2\leq E$ and also $(n_1,n_2)\star (0,k)=(0,0)$. Hence $E\leq \zet{D(1,2)}$ and so $D(1,2)$ is $\star$-nilpotent.  Indeed $\zl{D(1,2)}=2$.  

Finally $(1,0)\star (1,0)=(0,1)$ so
\[
 (n_1,n_2)=(n_1,0)+(0,n_2)=n_1(1,0)+n_2((1,0)\star (1,0))
\]
which shows that $D(1,2)$ is generated as a brace by the element $(1,0)$.
\end{proof}

\section{The Proof of Theorem~\ref{t:maina}}

The first part of Theorem~\ref{t:maina} follows from Proposition~\ref{p:prop4} so we only need to prove the second part.

\begin{proof}
Let $A$ be generated as a brace by $a$ and let  $c=a\star a$. We note that $c\in \zet{A}$. It follows that $a\star c=c\star a=0$ and Proposition~\ref{p:prop1} implies that for all positive integers $k$ we have 
\[
a^k=ka+\dfrac{k(k-1)}2 c=(ka)\left(\dfrac{k(k-1)}2 c\right)
\]
 since $\dfrac{k(k-1)}2 c\in \zet{A}$.  It is clear that $a^k\star a=a\star a^k$ so we have, using Lemma~\ref{l:lem2},  that 
\[
a^k\star a=a\star \left(ka+ \dfrac{k(k-1)}2 c\right) =a\star ka =k(a\star a)=kc.
\]
Also, again using Lemma~\ref{l:lem2},  
\begin{align*}
a^k\star a &= \left((ka)\left(\dfrac{k(k-1)}2 c\right)\right)\star a=ka\star \left(\dfrac{k(k-1)}2 c \star a\right) +\left(\dfrac{k(k-1)}2 c \star a\right)\\
& + ka\star a
=ka\star a
\end{align*}
so that for all positive integers $k$ we have 
\begin{equation}\label{eq:eqthm1}
a^k\star a=kc=ka\star a.
\end{equation}

Next we note that $a^{-k}a^k\star a=0$. Using Lemma~\ref{l:lem2} and \eqref{eq:eqthm1} we deduce that 
\[
0=a^{-k}\star (a^k\star a)+(a^k\star a)+ (a^{-k}\star a).
\]
Since $a^k\star a=kc\in \zet{A}$ this implies that $a^{-k}\star a=-kc=-k(a\star a)$. Since $A=\zen{2}{A}$ we have $xy\zet{A}=(x+y)\zet{A}$ for all $x,y\in A$ and in particular $a^{-k}\zet{A}=(-ka)\zet{A}$.  Hence there exists $u\in \zet{A}$ such that 
\[
a^{-k}\star a=((-ka)u)\star a=(-ka)\star a,
\]
using Lemma~\ref{l:lem2}
%so $a^{-k}\star a=-(ka\star a)=-kc$.  
Thus
\begin{equation}\label{eq:thm1b}
a^n\star a=na\star a=nc \text{ for all integers }n.
\end{equation}

Let $B=\{\al a+ \ga c|\al, \ga\in \mathbb{Z}\}$ and let $x=\al a +\ga c, y=\al_1a+\ga_1 c$ where $\al, \ga, \al_1, \ga_1\in \mathbb{Z}$.  We note that $B$ is clearly an additive subgroup of $A$.  Using Lemma~\ref{l:lem2} and the facts assembled above we see that 
\begin{align*}
x\star  y &=(\al a +\ga c)\star (\al_1a+\ga_1 c)= (\al a +\ga c)\star (\al_1 a)+(\al a +\ga c)\star \ga_1 c\\
&=(\al a +\ga c)\star (\al_1 a) =\al_1((\al a +\ga c)\star a)\\
&=\al_1 ((\al a)(\ga c)\star a)=\al_1 ((\al a)\star a)=\al_1\al c.
\end{align*}
It follows that 
\begin{align}\label{eq:thm1c}
xy &=x\star y +x+y %(\al a +\ga c)\star (\al_1 a+\ga_1 c)+ (\al a +\ga c)+ (\al_1a+\ga_1 c)\\
%&
=(\al +\al_1)a+ (\ga+\ga_1+\al\al_1)c\in B. %\notag
\end{align}
Furthermore,  it is easy to verify using \eqref{eq:thm1c} that  $x^{-1} =-\al a +(\al ^2-\ga)c$ so that $x^{-1}\in B$.  This means that $B$ is a subbrace of $A$ and that $A=\br{a}\leq B$.  Hence $A=B$.

We now define a mapping $f:D(1,2)\longrightarrow A$ by $f(\al, \ga)=\al a+\ga c$ for each $\al,\ga\in \mathbb{Z}$. The various definitions and \eqref{eq:thm1c} imply that $f$ is a homomorphism of left braces and since $A=B$ the map $f$ is an epimorphism.
\end{proof}

We move next to the brace $D(1,3)$, showing that it is indeed a left brace and that it is Smoktunowicz nilpotent.

\begin{proposition}\label{p:prop5}
The set $D(1,3)$ with the addition and multiplication as defined is a one-generator, Smoktunowicz-nilpotent left brace and $\zl{D(1,3)}=3$.  Furthermore the orders of the elements of the additive group of $D(1,3)$ are divisors of $n$ and $D(1,3)^2$ is abelian.
\end{proposition}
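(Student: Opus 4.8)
The plan is a direct verification of the claimed properties, carrying everything through explicit coordinates, in four stages. \textbf{Stage 1 (additive structure).} Since $+$ is defined componentwise, $(D(1,3),+)$ is literally $(\mathbb{Z}_n)^{4}$, hence an abelian group with neutral element $(0,0,0,0)$ in which every element has additive order dividing $n$; this already settles the assertion about the orders of the elements of the additive group.

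\textbf{Stage 2 (multiplicative group).} First one checks from the defining formula that $(0,0,0,0)$ is a two-sided multiplicative identity. Associativity is then proved by expanding $\big((m)(n)\big)(k)$ and $(m)\big((n)(k)\big)$ coordinate by coordinate: the first three coordinates present no difficulty — the first two being exactly as for $D(1,2)$ — and the only delicate one is the fourth, where the two expansions differ by a multiple of $\begin{pmatrix} m_1+n_1\\ 2\end{pmatrix}-\begin{pmatrix} m_1\\ 2\end{pmatrix}-\begin{pmatrix} n_1\\ 2\end{pmatrix}-m_1n_1$, which is zero. To obtain inverses, note that, for a fixed element, both left and right multiplication by it are ``triangular'' self-maps of $(\mathbb{Z}_n)^{4}$, in the sense that the $i$-th coordinate of the image equals the $i$-th coordinate of the argument plus an expression in the earlier coordinates only; such maps are bijective, so $(D(1,3),\cdot)$ is a group. (Alternatively one may write the inverse of a general element down explicitly.)

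\textbf{Stage 3 (brace axiom and $\star$).} Axiom (LB3) is checked by expanding both sides exactly as in the proof of Proposition~\ref{p:prop4}. A short computation then gives
\[
(m_1,m_2,m_3,m_4)\star(n_1,n_2,n_3,n_4)=\left(0,\ m_1n_1,\ m_1n_2+m_2n_1,\ m_2n_1-\begin{pmatrix} m_1\\ 2\end{pmatrix}n_1\right),
\]
so $D(1,3)^{2}=D(1,3)\star D(1,3)$ is the set of all quadruples with first coordinate $0$. Since the $\star$-product of two such quadruples vanishes, $D(1,3)^{2}\star D(1,3)^{2}=0$, and hence $D(1,3)^{2}$ is a trivial, hence abelian, brace; likewise $D(1,3)^{3}$ is the set of quadruples $(0,0,y,0)$ and $D(1,3)^{4}=0=D(1,3)^{(4)}$.

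\textbf{Stage 4 ($\star$-central series and generator).} The $\star$-formula shows that $\zet{D(1,3)}$ consists exactly of the quadruples with $m_1=m_2=0$; the quotient $D(1,3)/\zet{D(1,3)}$ is carried by $\mathbb{Z}_n^{2}$ with $(m_1,m_2)\star(n_1,n_2)=(0,m_1n_1)$, so its $\star$-center is the set of pairs with $m_1=0$, whence $\zen{2}{D(1,3)}=D(1,3)^{2}$; and $D(1,3)/\zen{2}{D(1,3)}\cong\mathbb{Z}_n$ is a trivial brace, so $\zen{3}{D(1,3)}=D(1,3)$ and $\zl{D(1,3)}=3$ (for $n\ge 2$; the case $n=1$ gives the trivial brace). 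In particular $D(1,3)$ is $\star$-nilpotent, hence Smoktunowicz-nilpotent. Finally, with $a=(1,0,0,0)$ one computes $a\star a=(0,1,0,0)$, $a\star(a\star a)=(0,0,1,0)$ and $(a\star a)\star a=(0,0,1,1)$, so $(0,0,0,1)=(a\star a)\star a-a\star(a\star a)\in\br{a}$; these four elements generate $(\mathbb{Z}_n)^{4}$ additively and $\br{a}$ is an additive subgroup containing them, so $\br{a}=D(1,3)$. The only genuinely laborious point is the associativity check in Stage 2 (the fourth-coordinate bookkeeping); the remaining steps are routine manipulations with the displayed formulas.
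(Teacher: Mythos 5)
Your proof is correct and follows essentially the same route as the paper: direct coordinate verification of the brace axioms, the same formula for $x\star y$, identification of the subgroups with first (resp.\ first two) coordinates zero as $D(1,3)^2$ and $\zet{D(1,3)}$, and the same three $\star$-computations to show $(1,0,0,0)$ generates. Your minor variations (the triangularity argument for invertibility, the binomial identity pinpointing the associativity check, and the quotient computation showing $\zen{2}{D(1,3)}$ is exactly $D(1,3)^2$) only make some steps the paper leaves as routine slightly more explicit.
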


\begin{proof}
It is clear that $D(1,3)$ is an abelian group under addition.  Furthermore, it is easy to see that 
$(0,0,0,0)$ will act as a multiplicative identity.  It is an easy, although slightly tedious,  computation to show that $D(1,3)$ has an associative multiplicative operation. Furthermore we have 
\begin{align*}
&\oq{x}(-x_1, x_1^2-x_2, 2x_1x_2-x_3-x_1^3,x_1x_2-x_4-\begin{pmatrix} x_1\\2\end{pmatrix} x_1)\\
&=(x_1-x_1, x_2+x_1^2-x_2+x_1(-x_1), x_3+2x_1x_2-x_3-x_1^3+x_1(x_1^2-x_2)+x_2(-x_1),\\& x_4+x_1x_2-x_4-\begin{pmatrix} x_1\\2\end{pmatrix} x_1+x_2(-x_1)-\begin{pmatrix} x_1\\ 2\end{pmatrix}(-x_1))\\
&=(0,0,0,0).
\end{align*}
Thus 
\begin{align*}\oq{x}^{-1}=(-x_1, x_1^2-x_2, 2x_1x_2-x_3-x_1^3,x_1x_2-x_4-\begin{pmatrix} x_1\\2\end{pmatrix} x_1)
\end{align*}
so that $D(1,3)$ is also a group under multiplication, although it is not abelian since in general the fourth component of $\oq{x}\oq{y}$ is different from the fourth component of $\oq{y}\oq{x}$. If $x=\oq{x}, y=\oq{y}, z=\oq{z}$, then we have 
\begin{align*}
&x(y+z) \\
=&x(y_1+z_1, y_2+z_2, y_3+z_3, y_4+z_4)\\
=&(x_1+y_1+z_1, x_2+y_2+z_2+x_1(y_1+z_1), \\
&x_3+y_3+z_3+x_1(y_2+z_2)+x_2(y_1+z_1), \\
 & x_4+y_4+z_4+x_2(y_1+z_1)-\begin{pmatrix} x_1\\ 2\end{pmatrix}(y_1+z_1))
\end{align*}
whereas
\begin{align*}
&xy+xz-x\\
=&(x_1+y_1,x_2+y_2+x_1y_1,x_3+y_3+x_1y_2+x_2y_1,\\
&x_4+y_4+x_2y_1-\begin{pmatrix} x_1\\ 2\end{pmatrix}y_1) \\
+& (x_1+z_1,x_2+z_2+x_1z_1,x_3+z_3+x_1z_2+x_2z_1,\\
&x_4+z_4+x_2z_1-\begin{pmatrix} x_1\\ 2\end{pmatrix}z_1) \\
&-\oq{x}
\end{align*}
and inspection of the two final expressions clearly yields that $x(y+z)=xy+xz-x$ so that $D(1,3)$ is a left brace.

A further easy computation shows that 
\begin{equation}\label{eq:prop5}
x\star y =(0,x_1y_1, x_1y_2+x_2y_1, x_2y_1-\begin{pmatrix} x_1\\ 2\end{pmatrix}y_1).
\end{equation}
Let $D_2=\{(0,x_2,x_3,x_4)| x_i\in\mathbb{Z}\}$ and $D_1=\{(0,0,x_3,x_4)|x_i\in\mathbb{Z}\}$.  These are clearly subbraces of $D(1,3)$ using the work above,  and our remark above shows that $D(1,3)^2\leq D_2$. Furthermore, if $x=\oq{x}\in\zet{D(1,3)}$, then  $x\star y=y\star x=0$ for all $y\in D(1,3)$ and it is straightforward to see that in this case $x_1=x_2=0$, whereas $x_3, x_4$ can be arbitrary. It follows that $D_1=\zet{D(1,3)}$.  Also if $\oq{x}\in D_2$, then $x_1=0$ and 
\begin{align*}
&\oq{x}\star \oq{y} =(0,0,x_2y_1,x_2y_1)\in D_1\\
& \oq{y}\star \oq{x} =(0,0,x_1y_2,x_1y_2) \in D_1
\end{align*}
so $D_2\leq \zen{2}{D(1,3)}$ and indeed this shows that $D(1,3)$ is Smoktunowicz-nilpotent  with $\zl{D(1,3)}=3$.  Since 
\[(0,x_2,x_3,x_4)(0,y_2,y_3,y_4)=(0,x_2+y_2,x_3+y_3,x_4+y_4),
\]
 $D_2$ is an abelian brace so $D(1,3)^2$ is abelian. Also 
 \begin{align*}
 (1,0,0,0)\star (1,0,0,0)&=(0,1,0,0),\\
 (1,0,0,0)\star (0,1,0,0)&=(0,0,1,0),\\
 (0,1,0,0)\star (1,0,0,0)&=(0,0,1,1),
 \end{align*}
 so that $D(1,3)$ is generated as a brace by $(1,0,0,0)$.
 
 Finally,  it is clear that $|D(1,3)|=n^4$ and that $nx=(0,0,0,0)$ for all $x\in D(1,3)$.

\end{proof}

\section{The Proofs of Theorems~\ref{t:mainb} and \ref{t:mainc}}\label{s:thmb}
The proofs of Theorem~\ref{t:mainb} and \ref{t:mainc} have some common themes and we shall employ some common notation which we introduce here.  In both results we assume that $A=\zen{3}{A}$ is a nonabelian left brace generated by an element $a$. For now we do not need to assume $na=0$.

We set $b=a\star a\in\zen{2}{A}=C_2$ and  $c=b\star b$.  We note that since $\zl{A}=3$ then $c_1=a\star b, c_2= b\star a, c\in \zen{1}{A}=C_1$. %Then $C_2/C_1=\zet{A/\zet{A}}$ so there is an %element $z\in C_1$ such that $a^{-1}ba=bz=zb=b+z$.  Furthermore,
%\[
%ba=abz=a(b+z)=ab+az-a=ab+a+z-a=ab+z
%\]
%so also $z=ba-ab$.
Then %using Lemma~\ref{l:lem2} 
we have $c_1=ab-a-b$ and $c_2=ba-b-a$ so 
\begin{equation}\label{eq:pfb1}
z=c_2-c_1=ba-ab\in C_1.
\end{equation}

 Also we note that $ab=a+b+c_1$ and $a^2=2a+b$.

%We also have 
%\begin{equation}\label{eq:pfb1}
%c_2= b\star a=ba-b-a=ab+z-b-a=a\star b+z=c_1+z.
%\end{equation}
%From this we see $ab=a+b+c_1$. We also  note that 
%$b= a^2-2a$ so we have  $a^2=2a+b$. 

\begin{lemma}\label{l:newlem1}
With the notation established above,  if $na=0$,  then $nb=nc_1=nc_2=nc=0$.
\end{lemma}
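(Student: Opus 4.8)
The plan is to use almost nothing beyond the additivity of $\star$ in its second argument, that is, Lemma~\ref{l:lem2}(i), together with the trivial fact that $x\star 0=0$ for every $x\in A$ (immediate from $x\star 0 = x\cdot 0 - x - 0 = 0$, or from $0\in\zet{A}$). The first step I would carry out is to record, by an easy induction on the natural number $m$ using Lemma~\ref{l:lem2}(i), that
\[
x\star(my)=m(x\star y)\qquad\text{for all }x,y\in A\text{ and all natural numbers }m.
\]
In particular, whenever $my=0$ we get $m(x\star y)=x\star(my)=x\star 0=0$; this is the only mechanism used in the proof.

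With this in hand the four divisibility statements follow in sequence. Since $na=0$, applying the observation with $x=y=a$ gives $nb=n(a\star a)=a\star(na)=0$, and applying it with $x=b$, $y=a$ gives $nc_2=n(b\star a)=b\star(na)=0$. Now that $nb=0$ has been established, applying the observation with $x=a$, $y=b$ yields $nc_1=n(a\star b)=a\star(nb)=0$, and with $x=y=b$ it yields $nc=n(b\star b)=b\star(nb)=0$. This exhausts the claim.

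I do not expect any real obstacle here: the statement is a direct consequence of the left distributive law in Lemma~\ref{l:lem2}(i), and the theory of $\star$ developed so far is more than enough. The only point requiring a little care is the logical order — one must first deduce $nb=0$ from $na=0$ before one can conclude $nc_1=0$ and $nc=0$, whereas $nc_2=0$ is obtained directly from $na=0$ since $c_2=b\star a$ has $a$ (rather than $b$) in the second slot.
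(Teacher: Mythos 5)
Your proof is correct and is essentially the paper's own argument: the paper likewise computes $nb=n(a\star a)=a\star(na)=a\star 0=0$ via Lemma~\ref{l:lem2}(i) and notes the remaining identities follow in the same way. Your only addition is to spell out the ordering (deducing $nb=0$ before $nc_1$ and $nc$), which is a sensible and accurate elaboration of the paper's "similar fashion" remark.
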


\begin{proof}
Using Lemma~\ref{l:lem2} we have $nb=n(a\star a)=a\star na=a\star 0=0$.  The rest of the lemma follows in a similar fashion using the definitions of these elements.
\end{proof}

\begin{lemma}\label{l:newlem2}
With the notation established above, for all natural numbers $k, s$:
\begin{enumerate}
\item[  (i)] $a^k=ka +\begin{pmatrix} k\\2 \end{pmatrix}b + \begin{pmatrix} k\\3 \end{pmatrix}c_1$, for $k\geq 3$;
\item[ (ii)] $ka=a^k-\begin{pmatrix} k\\2 \end{pmatrix}b - \begin{pmatrix} k\\3 \end{pmatrix}c_1 = 
(a^k-\begin{pmatrix} k\\2 \end{pmatrix}b)(-\begin{pmatrix} k\\3 \end{pmatrix}c_1)$;
\item[(iii)] $a^k\star a=kb + \begin{pmatrix} k\\2 \end{pmatrix}c_1$;
\item[ (iv)] $a^k\star b= kc_1$;
\item[  (v)] $b^k\star a=kc_1+kz=kb\star a=kc_2$;
\item[ (vi)] $kb\star b=b^k\star b=k(b\star b)$. In particular, if $b\star b=0$, then $kb\star b=0$.
\item[(vii)] $ka+sb=wa^k\left(s-\begin{pmatrix} k\\2\end{pmatrix}\right)b$ for some element $w\in C_1$.
\end{enumerate}
\end{lemma}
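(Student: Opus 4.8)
The plan is to derive (i)--(iv) as formal consequences of Propositions~\ref{p:prop1} and \ref{p:prop2}, to obtain (v)--(vi) from the twisted additivity of $\star$ in its first argument (Lemma~\ref{l:lem2}(iii)) together with the positions of the elements $b,c_1,c_2,c$ in the upper $\star$-central series, and to deduce (vii) by converting the additive identity in (i) into a product.

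First I would record the values of the sequence $a_1=a$, $a_{n+1}=a\star a_n$ from Proposition~\ref{p:prop1}: here $a_2=a\star a=b$, $a_3=a\star b=c_1\in C_1=\zet{A}$, and hence $a_4=a\star c_1=0$ since every element of $\zet{A}$ is annihilated by $\star$, so $a_n=0$ for all $n\geq 4$. Substituting into Proposition~\ref{p:prop1}, the binomial sum collapses to $a^k=ka+\binom{k}{2}b+\binom{k}{3}c_1$, which is (i) (and, with the usual conventions, this already holds for $k=1,2$, agreeing with $a^2=2a+b$). Part (ii) is the rearrangement of (i) together with the fact that for $v\in\zet{A}$ one has $xv=x+v$ (because $x\star v=0$), applied with $v=-\binom{k}{3}c_1$. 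Part (iii) is Proposition~\ref{p:prop2} after the same substitution. For (iv) I would induct on $k$: by Lemma~\ref{l:lem2}(ii), $a^{k+1}\star b=(a\cdot a^k)\star b=a\star(a^k\star b)+a^k\star b+a\star b$, and since $a^k\star b=kc_1\in\zet{A}$ by induction the first term vanishes, leaving $(k+1)c_1$.

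For (v) and (vi) the first argument of $\star$ is an integer multiple of $b$, so, $\star$ not being additive in the first slot, I would use Lemma~\ref{l:lem2}(iii). Two preliminary remarks make the correction terms disappear. Since $b\in C_2=\zen{2}{A}$ and $C_2/C_1$ is $\star$-central we have $C_2\star C_2\subseteq C_1$, so for $v\in C_2$, $\lambda_{v^{-1}}(b)=v^{-1}\star b+b\in b+C_1$; and for $w\in C_1$, Lemma~\ref{l:lem2}(iii) applied to $(w+b)\star x$, using $\lambda_{w^{-1}}(b)=b$ and $w,w^{-1}\in\zet{A}$, gives $(w+b)\star x=b\star x$ for all $x\in A$. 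An induction on $k$ via Lemma~\ref{l:lem2}(iii) with first argument $kb+b$ then yields $(k+1)b\star x=kb\star(b\star x)+(b\star x)+kb\star x$; for $x\in\{a,b\}$ we have $b\star x\in C_1=\zet{A}$, so $kb\star(b\star x)=0$ and thus $kb\star x=k(b\star x)$. With $x=a$ this reads $kb\star a=kc_2$, and $kc_1+kz=kc_2$ follows from \eqref{eq:pfb1}; with $x=b$ it reads $kb\star b=k(b\star b)$, and the case $b\star b=0$ is immediate. Finally, $C_2/C_1$ being a trivial brace gives $b^k=kb+w'$ for some $w'\in C_1$; writing $b^k=(kb)w'$ and applying Lemma~\ref{l:lem2}(ii) as in (iv) gives $b^k\star x=kb\star x$, which finishes (v) and (vi).

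For (vii), parts (i)/(ii) give $ka+sb=a^k+(s-\binom{k}{2})b-\binom{k}{3}c_1$, and it remains to realise the right-hand side as a product. For $w\in C_1$ we have $wa^k=w+a^k$ (since $w\in\zet{A}$), and by Lemma~\ref{l:lem2}(ii) and (iv), $(wa^k)\star((s-\binom{k}{2})b)=a^k\star((s-\binom{k}{2})b)=k(s-\binom{k}{2})c_1$, so that $(wa^k)\cdot((s-\binom{k}{2})b)=w+a^k+(s-\binom{k}{2})b+k(s-\binom{k}{2})c_1$; taking $w=-\binom{k}{3}c_1-k(s-\binom{k}{2})c_1\in C_1$ makes this equal to $ka+sb$. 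The main obstacle I foresee is (v)--(vi): Lemma~\ref{l:lem2}(iii) brings in $\lambda_{v^{-1}}$-twists and extra $\star$-summands in the first argument, and one must verify carefully, using $C_2\star C_2\subseteq C_1$ and $C_1=\zet{A}$, that every such correction vanishes; the rest is routine binomial bookkeeping on top of Propositions~\ref{p:prop1} and \ref{p:prop2}.
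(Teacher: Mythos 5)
Your proof is correct and follows essentially the same route as the paper: (i)--(iv) come from Propositions~\ref{p:prop1} and \ref{p:prop2} together with Lemma~\ref{l:lem2}, (v)--(vi) by identifying $kb$ with $b^k$ modulo $C_1=\zet{A}$ and inducting, and (vii) by absorbing the $C_1$-terms into a product with $a^k$. The only cosmetic differences are that you run the (v)--(vi) induction additively via Lemma~\ref{l:lem2}(iii) (the paper inducts multiplicatively on $b^k$ via Lemma~\ref{l:lem2}(ii) and then passes to $kb$), and that in (vii) you exhibit the element $w\in C_1$ explicitly rather than merely arguing modulo $C_1$.
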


\begin{proof}
Since $A=\zen{3}{A}$, Proposition~\ref{p:prop1} implies (i) immediately. The first part of (ii) is just a rewriting of this and the second part of (ii) follows  because $\begin{pmatrix} k\\3 \end{pmatrix}c_1 \in C_1$.  Part (iii) easily follows using  Proposition~\ref{p:prop2}.  For part (iv) we use induction. 
We already know that $a\star b=c_1$ by definition and we suppose inductively that $a^k\star b=kc_1$. Then, using Lemma~\ref{l:lem2} we have 
\[
a^{k+1}\star b=(aa^k)\star b=a\star (a^k\star b) +(a^k\star b)+ (a\star b)=kc_1+c_1=(k+1)c_1
\]
using the induction hypothesis and the fact that $a^k\star b\in \zet{A}$.

For part (v) we note that $kb+C_1=b^k+C_1=b^kC_1$ since $b\in\zen{2}{A}$ and hence $kb=w_1b^k$ for some element $w_1\in C_1$.  It then follows that $kb\star a=w_1b^k\star a=b^k\star a$, using Lemma~\ref{l:lem2}.  We already know that $b\star a=c_2=c_1+z$ by equation~\eqref{eq:pfb1}. Inductively suppose that $b^k\star a=kc_1+kz$ for some natural number $k$.  Then 
\[
b^{k+1}\star a= (bb^k)\star a=b\star (b^k\star a)+b^k\star a +b\star a=kc_1+kz+c_1+z
\]
using the induction hypothesis and Lemma~\ref{l:lem2} which proves (v).

Since  $kb\star b=b^k\star b$ a further  easy induction implies (vi).  Finally,  to prove (vii) we have 
\begin{align*}
ka+sb+C_1& =a^k-\begin{pmatrix} k\\2 \end{pmatrix}b - \begin{pmatrix} k\\3 \end{pmatrix}c_1+sb +C_1= 
a^k-\begin{pmatrix} k\\2 \end{pmatrix}b+sb+C_1 \\
& 
=a^k(\left(s-\begin{pmatrix} k\\2 \end{pmatrix}\right)b) +C_1 
=a^k(\left(s-\begin{pmatrix} k\\2 \end{pmatrix}\right)b) C_1
\end{align*}
using (ii) and the fact that $b\in \zen{2}{A}$. Hence $ka+sb=w a^k(\left(s-\begin{pmatrix} k\\2 \end{pmatrix}\right)b)$ for some element $w\in C_1$.
\end{proof}

We next observe that in our situation we have $b\star b=0$. Indeed, our result is slightly more general.

\begin{lemma}\label{l:newlem3}
With the notation established above let $A=\br{a}$ be a one-generator Smoktunowicz-nilpotent left brace  such that $\zl{A}=3$. If $b=a\star a$, then %satisfying $na=0$ for some natural number $n$. %Then 
$b\star b=0$.
\end{lemma}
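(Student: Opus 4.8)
The plan is to exploit the structure of $A$ as a $\star$-nilpotent one-generator brace with $\zl{A}=3$, together with the computational identities of Lemma~\ref{l:newlem2}, to show that the element $c=b\star b\in C_1=\zet{A}$ must in fact vanish. The key observation is that $b=a\star a\in C_2=\zen{2}{A}$, and since $A=\br a$ is generated as a brace by $a$, the quotient $A/C_1$ is generated by the image of $a$ and has $\star$-length $2$; so by (the $C_1$-analogue of) Theorem~\ref{t:maina}(ii), every element of $A$ is congruent modulo $C_1$ to something of the form $a^k((s-\binom k2)b)$, as recorded in Lemma~\ref{l:newlem2}(vii). In particular the additive subgroup $\langle a,b\rangle^+ + C_1$ already accounts for all of $A$ modulo $C_1$, and $C_1$ is generated as an additive group by the brace-commutators that land in it, namely by $c_1=a\star b$, $c_2=b\star a$ (equivalently $z=c_2-c_1$), and $c=b\star b$.

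First I would compute $b\star b$ directly in terms of $a$ using Lemma~\ref{l:lem2}(i)--(ii) and the definition $b=a\star a=a^2-2a$. Writing $b$ additively as $a^2-2a$, bilinearity and left/right $\star$-additivity give
\[
b\star b=(a^2-2a)\star(a^2-2a)=a^2\star a^2-2\,(a^2\star a)-2\,(a\star a^2)+4\,(a\star a).
\]
Each term on the right can be evaluated by Lemma~\ref{l:newlem2}: $a^k\star a=kb+\binom k2 c_1$ gives $a^2\star a=2b+c_1$ and $a\star a=b$; and $a^k\star a^2$, $a^2\star a^2$ are handled by first noting $a^2=2a+b$, expanding $a^k\star(2a+b)=2(a^k\star a)+(a^k\star b)$ with parts (iii) and (iv), and similarly splitting $a^2\star(\,\cdot\,)=2(a\star(\cdot))+(b\star(\cdot))$ using additivity on the left via Lemma~\ref{l:lem2}(iii) (here the $\lambda$-correction terms are harmless because $b\in C_2$, so $b\star x\in C_1$ and $a\star(b\star x)=0$). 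Carrying this out, the $b$-terms and $c_1$-terms must cancel — they have to, since $b\star b\in C_1$ by hypothesis — and one is left with an expression of the form $b\star b=\mu\,c$ for some explicit integer $\mu$; a more careful bookkeeping should in fact force $\mu c=0$, i.e.\ the whole expansion collapses.

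The cleanest route, and the one I would actually write up, avoids integer bookkeeping by a $\lambda$-map argument. Since $b\in C_2$, we have $\lambda_b(x)-x=b\star x\in C_1$ for all $x$, and $c_1=a\star b=\lambda_a(b)-b$. Apply $\lambda_a$ to the identity $b\star a=c_2$: by Lemma~\ref{l:lem2}(iv), $\lambda_a(b\star a)=(aba^{-1})\star\lambda_a(a)$, while $\lambda_a(c_2)=c_2$ since $c_2\in C_1$. Now $aba^{-1}\equiv b\pmod{C_1}$ because $C_1$ is an ideal and $b\in C_2$ is central mod $C_1$ in the multiplicative sense, and $\lambda_a(a)=a^2-a=a+b$; since $b\in C_2$ one computes $b\star(a+b)=b\star a+b\star b=c_2+c$ (using (v) and Lemma~\ref{l:lem2}(i)), and $(aba^{-1})\star(a+b)\equiv b\star(a+b)\pmod{C_1\star A=0}$... wait, this gives $c_2+c=c_2$, hence $c=0$. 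That is the argument in a nutshell: evaluate $\lambda_a$ of a known $C_1$-valued identity two ways, using that $\lambda_a$ fixes $C_1$ pointwise but shifts $b$ by $c_1$ and shifts $a$ to $a+b$, and read off $b\star b=0$.

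The main obstacle will be justifying the congruence $aba^{-1}\equiv b\pmod{C_1}$ and handling the $\star b$-correction precisely enough that no stray multiple of $c$ is lost — i.e.\ making sure that the "error terms" genuinely lie in $C_1\star A=0$ rather than merely in $C_1$. This requires using that $c_1,c_2\in C_1=\zet A$ so that $c_1\star x=x\star c_1=0$, together with Lemma~\ref{l:lem2}(iv)--(v) applied with $y=a$, and the fact (from the multiplicative side) that conjugation by $a$ on $A/C_1$ is trivial on the image of $b$ because that image lies in $\zeta((A/C_1),\cdot)$ since $C_2/C_1\le\zeta(\star,A/C_1)\le\zeta(A/C_1)$. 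Once that congruence is pinned down, the computation is short. An alternative safety net, if the $\lambda_a$-argument proves delicate, is to fall back on the brute-force expansion of $(a^2-2a)\star(a^2-2a)$ sketched above and verify by Lemma~\ref{l:newlem2} that the coefficient of $c$ is $0$.
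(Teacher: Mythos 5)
Your main argument (the $\lambda_a$ one) is correct, and it is genuinely different from the paper's proof. The paper argues via associativity of the multiplication: it computes $b(a^2)=2c_2+2a+2b+c$ directly, then uses $ba\equiv ab \pmod{C_1}$ to get $(ba)\star a=(ab)\star a=c_2+b$ and hence $(ba)a=2c_2+2b+2a$, and reads off $c=0$ by comparing the two sides of $(ba)a=b(a^2)$. You instead apply $\lambda_a$ to the identity $b\star a=c_2$ and use Lemma~\ref{l:lem2}(iv): the left side is $\lambda_a(c_2)=c_2$ since $c_2\in\zet{A}$, while on the right $\lambda_a(a)=a^2-a=a+b$ and $aba^{-1}=wb$ with $w\in C_1$ (because $b+C_1\in C_2/C_1=\zeta(\star,A/C_1)\leq\zeta(A/C_1,\cdot)$), so by Lemma~\ref{l:lem2}(ii) and $w\star x=0$ one gets $(wb)\star(a+b)=b\star(a+b)=c_2+c$, forcing $c=0$. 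Both proofs hinge on the same congruence "$b$ commutes with $a$ modulo $C_1$" plus $C_1\star A=0$; the paper's route needs only associativity and the $\star$--product formula, whereas yours makes the mechanism ($\lambda_a$ fixes $C_1$ pointwise but conjugation of $b$ only moves it by an element of $C_1$) more transparent and is, if anything, slightly shorter once the congruence $aba^{-1}=wb$ is justified — which you do correctly. One caveat: your fallback sketch is not sound as written, since the displayed expansion $(a^2-2a)\star(a^2-2a)=a^2\star a^2-2(a^2\star a)-2(a\star a^2)+4(a\star a)$ assumes additivity of $\star$ in the \emph{left} argument, which fails (Lemma~\ref{l:lem2}(iii) shows the correction terms are not of the naive form); so if you include it at all, it must be rerouted through (iii) or through Lemma~\ref{l:newlem2}(vii)-type rewritings. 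Since your write-up relies on the $\lambda_a$ argument, this does not affect the validity of the proof.
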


\begin{proof}
Using Lemma~\ref{l:lem2} and our established notation we have,  
\begin{align}\label{eq:t18a}
ba^2 & =b\star a^2 + b+a^2= b\star (2a+b) +2a+2b  \\
&= 2(b\star a) +b\star b +2a+2b 
= 2c_2 +2a+2b+b\star b \notag\\
&=2c_2 +2a+2b+c.
\end{align}
%We note that since $\zen{3}{A}=A$, then $c\in\zet{A}$.

On the other hand $ba=b\star a +b +a=c_2+b+a$. %Write $C_1=\zet{A}$ and $C_2=\zen{2}{A}$. 
Then $ba+C_1=c_2+b+a+C_1=b+a+C_1=ab+C_1
%ba+C_1=baC_1$ 
=abC_1$ since $c\in C_1$ and $b\in C_2$.  Thus $ba= c_2+b+a=wab$
%wba$ 
for some $w\in C_1$. Using Lemma~\ref{l:lem2} and the fact that $w,  c\in C_1$ we have 
\begin{equation*}\label{eq:thm18b}
ba\star a= (wab)\star a=(ab)\star a= a\star (b\star a)+b\star a+ a\star a=c_2+b
%wba\star a =w\star (ba \star a)+(ba\star a)+ w\star a =ba\star a =b\star (a\star a) + (a\star a)+b\star a
\end{equation*}
and hence 
\begin{equation}\label{eq:thm18c}
(ba)a=ba\star a +ba+a=(c_2+b)+(c_2+b+a)+a=2c_2+2b+2a.
\end{equation}
Comparing \eqref{eq:t18a} and \eqref{eq:thm18c}  we see that $c=b\star b=0$ since $(ba)a=b(a^2)$.
%Consequently $b\star b=0$.
\end{proof}

\begin{proof}[\textbf{Proof of Theorem~\ref{t:mainb}}] 

Clearly we may assume that $A=\br{a}$ is non-abelian.  We shall use the notation established at the start of this section and we shall now assume that $na=0$.

Let 
\[
B=\{k_1 a+k_2 b +k_3 c_1+ k_4 c_2| k_i \in\mathbb{Z}_n \text{ for }i=1,\dots, 4\}.
\]
It is clear that $B$ is an additive subgroup of the brace $A$.
We let
\[
x=n_1 a+ n_2 b+ n_3c_1+n_4c_2, y=t_1 a+ t_2 b+ t_3c_1+t_4c_2
\]
where $n_i, t_i\in\mathbb{Z}_n$ for $i=1,\dots 4$.  

It follows from Lemmas~\ref{l:lem2}, \ref{l:newlem1}, \ref{l:newlem2} and the fact that $%c,
c_1,c_2\in\zet{A}$ that
\[
x\star y=(n_1a+n_2b)\star (t_1a+t_2b)=t_1((n_1a+n_2b)\star a)+ t_2((n_1a+n_2b)\star b).
\]
Now by Lemma~\ref{l:newlem2} there exists $w\in C_1$ such that
\begin{align*}
(n_1a+n_2b)\star a &= \left(wa^{n_1}\left(n_2-\begin{pmatrix} n_1\\2\end{pmatrix}\right)b\right)\star a =\left(a^{n_1}\left(n_2-\begin{pmatrix} n_1\\2\end{pmatrix}\right)b\right)\star a \\
&=a^{n_1} \star (\left(n_2-\begin{pmatrix} n_1\\2\end{pmatrix}\right)b \star a) + \left(n_2-\begin{pmatrix} n_1\\2\end{pmatrix}\right)b \star a+(a^{n_1}\star a) \\
&= n_1b+ \begin{pmatrix} n_1\\2\end{pmatrix} c_1+\left(n_2-\begin{pmatrix} n_1\\2\end{pmatrix}\right)c_2.
\end{align*}

Furthermore, 
\begin{align*}
(n_1a+n_2b)\star b &= \left(wa^{n_1}\left(n_2-\begin{pmatrix} n_1\\2\end{pmatrix}\right)b\right) \star b =\left(a^{n_1}\left(n_2-\begin{pmatrix} n_1\\2\end{pmatrix}\right)b\right)\star b\\
&=a^{n_1}\star (\left(n_2-\begin{pmatrix} n_1\\2\end{pmatrix}\right)b\star b) + \left(n_2-\begin{pmatrix} n_1\\2\end{pmatrix}\right)b\star b + (a^{n_1}\star b)\\
&=
n_1c_1+ \left(n_2-\begin{pmatrix} n_1\\2\end{pmatrix}\right)c=n_1c_1.
\end{align*}
It follows easily that 
\begin{equation}\label{eq:pfz}
x\star y= t_1n_1b+ (t_1\begin{pmatrix} n_1\\2\end{pmatrix} +t_2n_1)c_1+ t_1\left(n_2-\begin{pmatrix} n_1\\2\end{pmatrix}\right)c_2 \in B.%+ t_2(n_2-\begin{pmatrix} n_1\\2\end{pmatrix})c. 
\end{equation}
Since $xy=x\star y+ x+ y$ it follows that $xy \in B$.  Also it is easy to see that 
\[
x^{-1}=-n_1a +(n_1^2-n_2)b+(2n_1n_2-n_3-n_1^3)c_1+\left(n_1n_2-n_3-n_1\begin{pmatrix} n_1\\2 \end{pmatrix}\right)z
\]
which is visibly an element of $B$ since $z=c_2-c_1$.  It follows that $B$ is a subbrace of $A$ and since $a\in B$ it is easy to see that $B=\br{a}=A$. 

Now it follows from \eqref{eq:pfz} that the elements of $A^2$ are precisely those elements in which the coefficient of $a$ is $0$. Thus if $x,y\in A^2$, then $n_1=t_1=0$ and it follows again using \eqref{eq:pfz} that in this case $x\star y=0$. It follows that $A^2=A^{(2)}$ is abelian as required.
\end{proof}

\begin{proof}[\textbf{Proof of Theorem~\ref{t:mainc}}] 

The first part of Theorem~\ref{t:mainc} was proved in Proposition~\ref{p:prop5} so we only need to prove the second part. It follows from Theorem~\ref{t:mainb} that $A^{(2)}$ is abelian. Furthermore, we showed in the proof of Theorem~\ref{t:mainb} that the additive subgroup $B$ was precisely the subbrace generated by $a$ and hence $B=\br{a}=A$.  We then define a map $f:D(1,3)\longrightarrow A$ by
\[
f((n_1,n_2,n_3,n_4))= n_1a+n_2b+n_3c_1+n_4z
\]
where $n_i\in\mathbb{Z}_n$ for $i=1,\dots, 4$.

The equalities above show that the function $f$ is a homomorphism of left braces and since $A=B$ the map $f$ is a surjection. This completes the proof.
\end{proof}

%\begin{theorem}\label{t:thm18}
%Let $A$ be a one-generator %non-abelian 
%left brace, generated by an element $a$ satisfying $na=0$ for some positive integer $n$. 
%If $A=\zen{3}{A}$, then $A^2$ is abelian.
%\end{theorem}

%\begin{proof}

We should like to thank the referee for their astute observations.

%\end{proof}

%%%%%%%%%%%%%%%%%%%%%%%%%%%%%%%%%%
\providecommand{\bysame}{\leavevmode\hbox to3em{\hrulefill}\thinspace}
\providecommand{\MR}{\relax\ifhmode\unskip\space\fi MR }
% \MRhref is called by the amsart/book/proc definition of \MR.
\providecommand{\MRhref}[2]{%
  \href{http://www.ams.org/mathscinet-getitem?mr=#1}{#2}
}
\providecommand{\href}[2]{#2}


\begin{thebibliography}{10}

\bibitem{dB15}
D.~Bachiller, \emph{Classification of braces of order {$p^3$}}, J. Pure Appl.
  Algebra \textbf{219} (2015), no.~8, 3568--3603.

\bibitem{BEKP24}
A.~Ballester-Bolinches, R.~Esteban-Romero, L.~A. Kurdachenko, and
  V.~P\'{e}rez-Calabuig, \emph{On the structure of some left braces}, to
  appear, International J. Group Theory.

\bibitem{BJ21}
M.~Bonatto and P.~Jedli\v{c}ka, \emph{Central nilpotency of skew braces}, J.
  Algebra Appl. \textbf{22} (2023), no.~12, Paper No. 2350255, 16.

\bibitem{fC18}
F.~Ced\'{o}, \emph{Left braces: solutions of the {Y}ang-{B}axter equation},
  Adv. Group Theory Appl. \textbf{5} (2018), 33--90.

\bibitem{CGS17}
F.~Ced\'{o}, T.~Gateva-Ivanova, and A.~Smoktunowicz, \emph{On the
  {Y}ang-{B}axter equation and left nilpotent left braces}, J. Pure Appl.
  Algebra \textbf{221} (2017), no.~4, 751--756.

\bibitem{CSV19}
F.~Ced\'{o}, A.~Smoktunowicz, and L.~Vendramin, \emph{Skew left braces of
  nilpotent type}, Proc. Lond. Math. Soc. (3) \textbf{118} (2019), no.~6,
  1367--1392.

\bibitem{CFT23}
I.~Colazzo, M.~Ferrara, and M.~Trombetti, \emph{On derived-indecomposable
  solutions of the {Y}ang-{B}axter equation}, to appear, Publicacions
  Matem\`{a}tiques.

\bibitem{DKS24}
M.~R. Dixon, L.~A. Kurdachenko, and I.~Ya. Subbotin, \emph{Generalized
  nilpotent braces and nilpotent groups}, to appear.

\bibitem{JVV22}
E.~Jespers, A.~van Antwerpen, and L.~Vendramin, \emph{Nilpotency of skew braces
  and multipermutation solution of the {Y}ang-{B}axter equation}, Commun.
  Contemp. Math. \textbf{25} (2023), no.~9, Paper No. 2250064, 20.

\bibitem{KS24}
L.~A. Kurdachenko and I.~Ya. Subbotin, \emph{On the structure of some
  one-generator braces}, to appear, Proc. Edinburgh Math. Soc.

\bibitem{wR05}
W.~Rump, \emph{A decomposition theorem for square-free unitary solutions of the
  quantum {Y}ang-{B}axter equation}, Adv. Math. \textbf{193} (2005), no.~1,
  40--55.

\bibitem{wR07b}
\bysame, \emph{Braces, radical rings, and the quantum {Y}ang-{B}axter
  equation}, J. Algebra \textbf{307} (2007), no.~1, 153--170.

\bibitem{aS18}
A.~Smoktunowicz, \emph{On {E}ngel groups, nilpotent groups, rings, braces and
  the {Y}ang-{B}axter equation}, Trans. Amer. Math. Soc. \textbf{370} (2018),
  no.~9, 6535--6564.

\bibitem{aS22}
\bysame, \emph{Algebraic approach to {R}ump's results on relations between
  braces and pre-{L}ie algebras}, J. Algebra Appl. \textbf{21} (2022), no.~3,
  Paper No. 2250054, 13.

\bibitem{mT23}
M.~Trombetti, \emph{The structure skew brace associated with a finite
  non-degenerate solution of the {Y}ang-{B}axter equation is finitely
  presented}, Proc. Amer. Math. Soc. \textbf{152} (2024), no.~2, 573--583.

\end{thebibliography}
\end{document}